\newcolumntype{L}{>{\raggedright\arraybackslash}X}
\DeclareMathAlphabet\mathbb{U}{msb}{m}{n}
\numberwithin{equation}{section}
\newtheorem{theorem}{Theorem}[section]
\newtheorem{cor}[theorem]{Corollary}
\newtheorem{lemma}[theorem]{Lemma}
\newtheorem{remark}[theorem]{Remark}
\newtheorem{defin}[theorem]{Definition}
\newcommand{\cD}{\mathcal{D}}
\newcommand{\cN}{\mathcal{N}}
\newcommand{\R}{\mathbb{R}}
\newcommand{\N}{\mathbb{N}}
\newcommand*{\E}{\mathbb{E}}
\renewcommand{\le}{\leqslant}
\renewcommand{\ge}{\geqslant}
\renewcommand{\leq}{\leqslant}
\renewcommand{\geq}{\geqslant}
\providecommand{\abs}[1]{\lvert{#1}\rvert}
\providecommand{\norm}[1]{\lVert{#1}\rVert}
\newcommand{\Hell}{\mathsf{D}}
\newcommand{\KL}{\mathsf{KL}}
\newcommand{\Ren}{\mathsf{R}}
\newcommand{\opnorm}{\@ifstar\@opnorms\@opnorm}
\newcommand{\@opnorms}[1]{%
	\left|\mkern-1.5mu\left|\mkern-1.5mu\left|
	#1
	\right|\mkern-1.5mu\right|\mkern-1.5mu\right|
}
\newcommand{\@opnorm}[2][]{%
	\mathopen{#1|\mkern-1.5mu#1|\mkern-1.5mu#1|}
	#2
	\mathclose{#1|\mkern-1.5mu#1|\mkern-1.5mu#1|}
}
\newcommand{\PreserveBackslash}[1]{\let\temp=\\#1\let\\=\temp}
\newcolumntype{C}[1]{>{\PreserveBackslash\centering}p{#1}}
\newcommand\bs[1]{\boldsymbol{#1}}
\newcommand\mc[1]{\mathcal{#1}}
\newcommand\ms[1]{\mathscr{#1}}
\newcommand\msf[1]{\mathsf{#1}}
\definecolor{MITBrown}{RGB}{164, 31, 50}
\DeclareMathOperator\divergence{div}
\DeclareMathOperator\ent{ent}
\DeclareMathOperator\law{law}
\DeclareMathOperator\var{var}
\newcommand{\D}{\mathrm{d}}
\newcommand{\Coup}{\ms C}
\newcommand\deq{\coloneqq}
\newcommand\mmid{\mathbin{\|}}
\newcommand\T{\mathsf{T}}
\newcommand{\esssup}[1]{\mathop{#1\text{-}\mathrm{ess\,sup}}}
\newcommand{\CD}{\msf{CD}}
\newcounter{dummy}
\newcommand\myitem[1][]{\item[#1]\refstepcounter{dummy}\def\@currentlabel{#1}}
\tikzset{
    shadedNode/.style={rectangle, draw=none, fill=blue!20, inner sep=1mm}
}
\def\blfootnote{\gdef\@thefnmark{}\@footnotetext}
\begin{document}

	\title{Shifted Composition II: Shift Harnack Inequalities \\ and Curvature Upper Bounds}

 	\author{
		Jason M. Altschuler\\
		UPenn\\
		\texttt{alts@upenn.edu}
		\and
		Sinho Chewi \\
		IAS \\
		\texttt{schewi@ias.edu}
	}
	\date{}
	\maketitle

	\begin{abstract}
            We apply the shifted composition rule---an information-theoretic principle introduced in our earlier work~\cite{scr1}---to establish shift Harnack inequalities for the Langevin diffusion.
            We obtain \emph{sharp} constants for these inequalities for the first time, allowing us to investigate their relationship with other properties of the diffusion.
            Namely, we show that they are equivalent to a sharp ``local gradient-entropy'' bound, and that they imply curvature \emph{upper} bounds in a compelling reflection of the Bakry{--}\'{E}mery theory of curvature \emph{lower} bounds.
            Finally, we show that the local gradient-entropy inequality implies optimal concentration of the score, a.k.a.\ the logarithmic gradient of the density. 
	\end{abstract}

    \small
	\setcounter{tocdepth}{2}
	\tableofcontents	
	\normalsize


\section{Introduction}\label{sec:intro}

This paper is a sequel to our earlier work~\cite{scr1}, in which we introduced an information-theoretic principle called the \emph{shifted composition rule}. Briefly, this principle allows for bounding information-theoretic divergences, such as the Kullback{--}Leibler (KL) or R\'enyi divergence, between the marginal laws of two stochastic processes through the introduction of a third, auxiliary process. In that paper, we applied the shifted composition rule to provide, among other results, information-theoretic proofs of F.-Y.\ Wang's celebrated dimension-free Harnack inequalities for diffusions~\cite{Wang1997LSINoncompact} via their dual formulation as a certain family of reverse transport inequalities.

The Harnack inequalities therein encode regularity for Kolmogorov's \emph{backward} equation. The aim of the present paper is to apply our information-theoretic framework to the dual problem of regularity for Kolmogorov's \emph{forward} equation, i.e., the Fokker{--}Planck equation. To describe the problem setting and our results, we introduce some basic concepts. 

\paragraph*{Harnack and shift Harnack inequalities.}
For concreteness, let $V : \R^d\to\R$ be a smooth potential and consider the Langevin diffusion
\begin{align}\label{eq:langevin}
    \D X_t = -\nabla V(X_t) \, \D t + \sqrt 2 \,\D B_t\,,
\end{align}
where ${(B_t)}_{t\ge 0}$ is a standard Brownian motion on $\R^d$.
Following the storied tradition of functional analysis, we study the regularity of this process through its \emph{Markov semigroup} ${(P_t)}_{t\ge 0}$, which acts on any (reasonable) function $f : \R^d\to\R$ via $P_t f(x) \deq \E[f(X_t) \mid X_0 = x]$.
Classically, under a curvature lower bound of the form $\nabla^2 V \succeq \alpha I$, $\alpha \in \R$ (also called the \emph{curvature-dimension condition}, or the \emph{Bakry{--}\'Emery criterion}), one obtains Harnack inequalities of the form
\begin{align}\label{eq:intro_harnack}
    {P_t f(x)}^p \le C(p,t,x,y)\, P_t(f^p)(y)\,, \qquad \text{for all}~ x,y\in\R^d\,,\, t > 0\,,~\text{and}~f : \R^d\to\R_{\geq 0}\,.
\end{align}
Here, $p > 1$ and $C(p,t,x,y) > 0$ is an appropriate constant.
Such inequalities witness the regularizing effect of the semigroup: they imply that for every $t > 0$, the semigroup $P_t$ maps bounded functions into differentiable ones.
If the semigroup is described by transition densities $(t,x,y)\mapsto p_t(x,y)$, then this amounts to regularity properties with respect to the \emph{backward} variable $x$, i.e., with respect to perturbations to the initial condition of the diffusion.
Moreover, the sharp Harnack inequalities imply back the curvature lower bound $\nabla^2 V \succeq \alpha I$; see~\cite{Wang04EquivHarnack, Wang10HarnackBoundary, bakry2014analysis, Wang14Diffusion} or~\cite[\S 6]{scr1} for further discussion.

To address the regularity of Kolmogorov's \emph{forward} equation, the seminal work of F.-Y.\ Wang in~\cite{Wang14ShiftHarnack} introduced the family of \emph{shift Harnack inequalities}, written
\begin{align}\label{eq:intro_shift_harnack}
    {P_t(f(\cdot + v))}^p \le C(p, t, v)\, P_t(f^p)\,, \qquad \text{for all}~v\in\R^d,\, t > 0\,,~\text{and}~f : \R^d\to\R_{\geq 0}\,.
\end{align}
Shift Harnack inequalities imply, for example, the existence of the transition densities ${(p_t)}_{t\ge 0}$ with respect to the Lebesgue measure, and they also entail gradient bounds for the Lebesgue density $p_t(x,y)$ with respect to the \emph{forward} variable $y$ (a.k.a.\ the terminal condition for the diffusion). See \S\ref{sec:shift_harnack_bg} for further background and discussion.
Note that the Harnack~\eqref{eq:intro_harnack} and shift Harnack~\eqref{eq:intro_shift_harnack} inequalities differ because the Langevin semigroup does not commute with convolutions.

In his original paper~\cite{Wang14ShiftHarnack}, F.-Y.\ Wang established a number of applications and equivalences for shift Harnack inequalities. (See also his monographs~\cite{Wang13HarnackSPDE, Wang14Diffusion} for comprehensive expositions and further applications.) Through the use of coupling arguments, he then established integration by parts formulas and shift Harnack inequalities for (degenerate) stochastic Hamiltonian systems and some SPDEs.
Subsequent work extended his techniques to an abundance of settings, including SDEs driven by fractional Brownian motion~\cite{Fan15FBM}, SDEs driven by jump processes~\cite{Wang16IBPJumps}, other examples of SPDEs~\cite{Zhang16ShiftHarnack, LvHua21HarnackSPDEs}, McKean{--}Vlasov equations~\cite{Wang18Landau, HuaWan19DistDepSingular}, and SDEs with irregular coefficients~\cite{Huang19HarnackIntegrable, HuaWan19DistDepSingular, LvHua21HarnackSPDEs}.

Our starting point is the equivalent reformulation of the shift Harnack inequality~\eqref{eq:intro_shift_harnack}, via H\"older duality, in the form of a reverse transport inequality:
\begin{align}\label{eq:intro_rev_transport}
    \Ren_q(\delta_x P_t * \delta_v \mmid \delta_x P_t)
    &\le C'(p, t, v)\,, \qquad\text{for all}~x\in\R^d\,, t > 0\,,
\end{align}
see \S\ref{ssec:prelim:duality}.
Here, $\Ren_q$ is the R\'enyi divergence of order $q \deq \tfrac{p}{p-1}$ (see \S\ref{ssec:prelim:info} for information-theoretic preliminaries).
We apply the shifted composition rule to provide information-theoretic arguments, in discrete time, to establish~\eqref{eq:intro_rev_transport} and hence~\eqref{eq:intro_shift_harnack}.
In this respect, our development parallels our earlier work~\cite{scr1} at least at a syntactic level, and we organize our paper accordingly to emphasize these similarities.
However, despite the syntactic similarity, the problem of forward regularity is substantially different from that of backward regularity, and the latter is far less well-understood.

\paragraph*{Relationship with curvature upper bounds.}
Indeed, whereas F.-Y.\ Wang's original Harnack inequalities and the celebrated web of equivalences around the curvature-dimension condition have been well-understood for at least two decades, the understanding of shift Harnack inequalities is relatively nascent.\footnote{See, for example,~\cite{Wang14ShiftHarnack} for a discussion of the greater challenges faced in the forward regularity setting.}
A significant point of departure is that the optimal constants $C(p,t,v)$ in~\eqref{eq:intro_shift_harnack} are not sharply characterized.
In fact, the bounds established in the literature involve constants $C(p,t,v)$ which \emph{diverge to infinity} as $t\to\infty$.
Consequently, such bounds do not yield meaningful information about the regularity of the stationary distribution.

One of the primary contributions of this paper is to prove shift Harnack inequalities with \emph{optimal} constants.
With these sharp inequalities in hand, we are then in a position to investigate the possibility of developing \emph{equivalences} for the shift Harnack inequalities.
Towards this end, we prove the following chain of implications.

\begin{restatable}{theorem}{equivthm}\label{thm:equiv}
    Let ${(P_t)}_{t\ge 0}$ denote the Markov semigroup corresponding to the Langevin diffusion with potential $V$.
    Let $\beta > 0$ and $p, q > 1$.
    Consider the following properties.
    \begin{enumerate}[leftmargin=5.5em]
        \myitem[$(\msf{CurvBdd})$]\label{CurvBDD} The two-sided curvature bound $-\beta I \preceq \nabla^2 V \preceq \beta I$ holds.
        \myitem[$(\msf{LGE})$]\label{LGE} The local gradient-entropy bound
        \begin{align*}
            \frac{\norm{P_t \nabla f}^2}{P_t f} \le \frac{2\beta}{1-\exp(-2\beta t)}\,\{P_t(f\log f) - P_t f \log P_t f\}
        \end{align*}
        holds for all $t > 0$ and all smooth $f : \R^d\to \R_{>0}$.
        \myitem[$(\msf{SH}_p)$]\label{SHp} The shift Harnack inequality
        \begin{align*}
            {P_t(f(\cdot + v))}^p
            &\le \exp\Bigl( \frac{\beta p\,\norm v^2}{2\,(p-1)\,(1-\exp(-2\beta t))}\Bigr)\, P_t(f^p)
        \end{align*}
        holds for all $v\in\R^d$, all $t > 0$, and all $f : \R^d\to\R_{>0}$.
        \myitem[$(\msf{SRT}_q)$]\label{SRTq} The shift reverse transport inequality
        \begin{align*}
            \Ren_q(\delta_x P_t * \delta_v \mmid \delta_x P_t)
            &\le \frac{\beta q\,\norm v^2}{2\,(1-\exp(-2\beta t))}
        \end{align*}
        holds for all $x,v\in\R^d$ and all $t > 0$.
        \myitem[$(\msf{SH}_{\rm log})$]\label{SHlog} The shift log-Harnack inequality
        \begin{align*}
            P_t(f(\cdot + v))
            &\le \log P_t(\exp f) + \frac{\beta \,\norm v^2}{2\,(1-\exp(-2\beta t))}
        \end{align*}
        holds for all $v\in\R^d$, all $t > 0$, and all $f : \R^d\to\R_{>0}$.
        \myitem[$(\msf{SRT}_1)$]\label{SRTlog} The shift reverse transport inequality
        \begin{align*}
            \KL(\delta_x P_t * \delta_v \mmid \delta_x P_t)
            &\le \frac{\beta\,\norm v^2}{2\,(1-\exp(-2\beta t))}
        \end{align*}
        holds for all $x,v\in\R^d$ and all $t > 0$.
        \myitem[$(\msf{CurvUB})$]\label{CurvUB} The curvature upper bound $\nabla^2 V \preceq \beta I$ holds.
    \end{enumerate}
    Then, the following implications hold.
    \vspace{0.1cm}
\begin{center}
\begin{mdframed}[backgroundcolor=blue!5]
    \begin{tikzcd}[
        ampersand replacement=\&,
        background color=blue!5,
        column sep=1em,
        row sep=1em,
        every arrow/.append style={-Latex, line width=0.3mm, black}
    ]
        \text{\ref{CurvBDD}} \arrow[r, Rightarrow] \& \text{\ref{LGE}} \arrow[r, Leftrightarrow] \& (\forall p > 1~\text{\ref{SHp}}) \arrow[r, Rightarrow] \& (\exists p > 1~\text{\ref{SHp}}) \arrow[r, Rightarrow] \& \text{\ref{SHlog}} \arrow[r, Rightarrow] \& \text{\ref{CurvUB}} \\
        \& \& (\forall q > 1~\text{\ref{SRTq}}) \arrow[r, Rightarrow]\arrow[u, Leftrightarrow] \& (\exists q > 1~\text{\ref{SRTq}}) \arrow[r, Rightarrow]\arrow[u, Leftrightarrow] \& \text{\ref{SRTlog}} \arrow[u, Leftrightarrow]
    \end{tikzcd}
\end{mdframed}
\end{center}
\end{restatable}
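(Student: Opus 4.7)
The diagram has three families of arrows. The vertical equivalences \ref{SHp}~$\Leftrightarrow$~\ref{SRTq} (with $q = p/(p-1)$) and \ref{SHlog}~$\Leftrightarrow$~\ref{SRTlog} are H\"older/Gibbs dualities between moment inequalities and divergence inequalities, which I plan to invoke from \S\ref{ssec:prelim:duality}. The $\forall \Rightarrow \exists$ arrows are trivial, and $\exists p\,\ref{SHp} \Rightarrow \ref{SHlog}$ follows by substituting $f = \exp(g/p)$ and letting $p \to \infty$, under which the constant $\beta p/(2(p-1))$ converges to $\beta/2$. This leaves three substantive steps: (a)~\ref{CurvBDD}~$\Rightarrow$~\ref{LGE}, (b)~the equivalence \ref{LGE}~$\Leftrightarrow$~$\forall p\,\ref{SHp}$, and (c)~the reverse implication \ref{SHlog}~$\Rightarrow$~\ref{CurvUB}.

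\textbf{Steps (a)--(b).} For (a), I plan to establish the dual statements $\forall q\,\ref{SRTq}$ via the shifted composition rule of \cite{scr1}, working on an Euler--Maruyama discretization of \eqref{eq:langevin} and passing to the continuous-time limit. Concretely, one couples a trajectory $(X_s)_{s \in [0,t]}$ started at $x$ with a second trajectory whose terminal value equals $X_t + v$, interpolating via a shift schedule $(v_s)_{s \in [0,t]}$ with $v_0 = 0$ and $v_t = v$; the shifted composition rule then bounds $\Ren_q(\delta_x P_t * \delta_v \mmid \delta_x P_t)$ by an integrated one-step Girsanov cost of the schematic form $\tfrac{q}{2}\int_0^t \|\dot v_s + \nabla V(X_s) - \nabla V(X_s + v_s)\|^2 \, ds$. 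The two-sided bound $-\beta I \preceq \nabla^2 V \preceq \beta I$ lets one control the drift difference pointwise by $\|\nabla V(X_s) - \nabla V(X_s + v_s)\| \le \beta \|v_s\|$, and an Euler--Lagrange optimization over the schedule yields the sharp exponential profile $v_s = v\,(1 - e^{-2\beta(t-s)})/(1 - e^{-2\beta t})$ and the sharp constant $\beta q\|v\|^2/(2(1 - e^{-2\beta t}))$. For (b), the implication \ref{LGE}~$\Rightarrow$~$\forall p\,\ref{SHp}$ is the standard interpolation argument: compute $\tfrac{d}{ds} \log P_{t-s}((P_s f)^p)(x + (s/t) v)$, bound the derivative pointwise using \ref{LGE}, and integrate, with the prefactor $(1 - e^{-2\beta t})^{-1}$ arising as $\int_0^t 2\beta e^{-2\beta(t-s)}/(1 - e^{-2\beta t}) \, ds$ after the appropriate Gr\"onwall-type optimization in the direction of $v$. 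The converse is obtained by linearizing \ref{SHp} at $f = 1 + \varepsilon g$ as $\varepsilon \to 0$ and $\|v\| \to 0$. I expect the principal difficulty in this half to be pinning down the \emph{sharp} constant, which prior work did not achieve; the flexibility afforded by the shift schedule in the shifted composition rule is what makes this possible.

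\textbf{Step (c).} The reverse implication \ref{SHlog}~$\Rightarrow$~\ref{CurvUB} is the novel piece of the theorem and I expect it to be the main obstacle. My plan is a short-time asymptotic expansion. Sending $t \to 0$ with $v$ fixed would blow up the right-hand side as $\|v\|^2/(4t)$, so the correct scaling is $v = \sqrt{t}\, w$ for fixed $w \in \R^d$, under which the right-hand side tends to $\beta \|w\|^2/4$. On the left, I Taylor-expand both $P_t(f(\cdot + \sqrt{t}\, w))(x)$ and $\log P_t(e^f)(x)$ using the short-time expansion $P_t g = g + t \mathcal L g + \tfrac{t^2}{2} \mathcal L^2 g + \cdots$ of the Langevin generator $\mathcal L = \Delta - \langle \nabla V, \nabla\,\cdot\rangle$, combined with the translation expansion in $\sqrt{t}\, w$. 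The leading $O(\sqrt{t})$ contributions cancel by a mean-zero argument, and the first nontrivial $O(t)$ balance splits into a Gaussian-noise part matching $\|w\|^2/4$ and a residual piece which carries $\nabla^2 V$ through a commutator of the form $[\mathcal L, w \cdot \nabla]$. Specializing $f$ to a well-chosen family of test functions (e.g.\ linear perturbations with an additional $\varepsilon$ scaling) and equating the surviving $O(t)$ coefficients isolates the pointwise inequality $\langle w, \nabla^2 V(x)\, w\rangle \le \beta \|w\|^2$ at every $x \in \R^d$, which is precisely \ref{CurvUB}. The delicate bookkeeping in this expansion---making sure that the Hessian-bearing term does not cancel against other $O(t)$ contributions and that the correct directional selection of $w$ is available---is the crux of the argument.
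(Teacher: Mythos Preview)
Your plan for the duality arrows and for step~(a) matches the paper (which likewise obtains \ref{SRTq} from \ref{CurvBDD} via the shifted composition rule and a calculus-of-variations optimization over the shift schedule, though the optimal schedule is $a_s = \sinh(\beta s)/\sinh(\beta t)$ rather than the exponential profile you wrote). For step~(b) the paper does not interpolate along the semigroup; it invokes \cite[Proposition~1.3.2]{Wang13HarnackSPDE}, which is a Young-inequality/Legendre duality between the family $\{\text{\ref{SHp}}\}_{p>1}$ and the single bound \ref{LGE}. Your proposed interpolation $s\mapsto \log P_{t-s}\bigl((P_s f)^p\bigr)\bigl(x+(s/t)v\bigr)$ has the wrong right endpoint: at $s=t$ it produces $\{(P_t f)(x+v)\}^p$, the ordinary Harnack quantity, not $\{P_t(f(\cdot+v))(x)\}^p$, the shift Harnack quantity. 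These differ precisely because $P_t$ does not commute with translation, which is the whole point of the shift Harnack inequality.

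The substantive gap is in step~(c). Under your scaling $v=\sqrt t\,w$, the remainder $\tfrac{\beta\lVert v\rVert^2}{2(1-e^{-2\beta t})}$ tends to $\lVert w\rVert^2/4$ (not $\beta\lVert w\rVert^2/4$; note that $\beta$ disappears at leading order), while $P_t\bigl(f(\cdot+\sqrt t\,w)\bigr)(x)-\log P_t(e^f)(x)\to 0$. The inequality is therefore \emph{strictly} slack at $O(1)$ whenever $w\ne 0$, and a slack inequality carries no information at any higher order; in particular, there is no ``mean-zero argument'' available to kill the $O(\sqrt t)$ term $\langle w,\nabla f(x)\rangle$ in this pointwise bound. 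The paper instead scales $v=2t\,\nabla f(x)$ \emph{linearly} in $t$ and chooses the test function so that $\nabla^2 f(x)=0$. With this choice the remainder is $O(t)$, the specific coefficient $c=2$ is exactly what forces the $O(t)$ terms on both sides to coincide, and the curvature is then read off at order $t^2$ via the commutator identity $\ms L\nabla f-\nabla\ms L f=\nabla^2 V\,\nabla f$ together with $\Gamma_2(f,f)=\langle\nabla f,\nabla^2 V\,\nabla f\rangle+\lVert\nabla^2 f\rVert_{\rm HS}^2$, the Hessian term vanishing by construction.
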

 \vspace{0.1cm}
The implications~\ref{LGE} $\Leftrightarrow$~\ref{SHp} and~\ref{SHp} $\Rightarrow$~\ref{SHlog} are known from~\cite{Wang14ShiftHarnack} and~\cite[Theorem 1.3.5]{Wang13HarnackSPDE} respectively, and the equivalences~\ref{SHp} $\Leftrightarrow$~\ref{SRTq} (for the dual exponent $q = \frac{p}{p-1}$) and~\ref{SHlog} $\Leftrightarrow$~\ref{SRTlog} follow from duality (see \S\ref{ssec:prelim:duality}).
However, to our knowledge, the sharp forms of all the functional inequalities~\ref{LGE},~\ref{SHp},~\ref{SHlog},~\ref{SRTq}, and~\ref{SRTlog} appear here for the first time.
Among these implications, we found it surprising that the shift Harnack inequalities imply back the curvature upper bound, in a remarkable parallel to the equivalences with the curvature-dimension condition which captures curvature \emph{lower} bounds.
It is a tantalizing question to close this chain of implications, e.g., by showing that~\ref{CurvUB} is sufficient to imply~\ref{LGE} or~\ref{SHp}.
A related question is to characterize~\ref{CurvUB} via a gradient commutation inequality.
We discuss these directions further in \S\ref{sec:equiv}.

The motivation for these questions stems from the corresponding synthetic characterizations of Ricci curvature lower bounds, which extended the reach of tools from geometric analysis to more general metric measure spaces~\cite{Sturm06MMSI, Sturm06MMSII, LotVil09Ric, AmbGigSav14MMS, AmbGigSav15BakryEmery, ErbKuwStu15Bochner}.
In contrast, we are only aware of the works~\cite{Wu20UpperBakryEmery, Sturm21UpperRicci} on characterizations of Ricci curvature upper bounds.

\paragraph*{Implications for the stationary distribution.}
Suppose now that the Langevin diffusion~\eqref{eq:langevin} admits a unique stationary distribution $\pi$, so that necessarily $\pi\propto\exp(-V)$.
This holds true as soon as $\int\exp(-V) < \infty$.

Since the sharp constants in Theorem~\ref{thm:equiv} do not diverge as $t \to \infty$, our results immediately imply functional inequalities for the stationary distribution $\pi$ which may be of independent interest (see Corollary~\ref{cor:stationary}).
As one example,~\ref{LGE} reads
\begin{align}\label{eq:LGE_pi}\tag{$\msf{LGE}_\pi$}
    \frac{\norm{\E_\pi\nabla f}^2}{\E_\pi f}
    &\le 2\beta\ent_\pi(f) \qquad\text{for all}~f : \R^d\to\R_{>0}\,.
\end{align}
We show that this inequality implies a sharp concentration bound for $\nabla V$---often called the \emph{score} function in the statistical literature---under $\pi$. To illustrate this result, suppose that $\pi = \cN(0, \beta^{-1} I)$ with $\beta > 0$, so that $\nabla V(x) = \beta x$.
Then, it is easy to see that under $\pi$, $\nabla V$ has law $\cN(0, \beta I)$ and, in particular, is a $\sqrt\beta$-sub-Gaussian random vector. (We recall that a random vector $X$ is called \emph{$\sigma$-sub-Gaussian} if, for every unit vector $e\in\R^d$, $\langle e, X\rangle$ is $\sigma^2$-sub-Gaussian, i.e., $\E\exp(\lambda \,\langle e, X\rangle) \le \exp(\lambda^2 \sigma^2/2)$ for all $\lambda \in \R$.)
Our concentration inequality shows that this observation extends far beyond the Gaussian case.

\begin{restatable}{theorem}{gradVbd}\label{thm:gradV_concentration}
    Let $\pi\propto\exp(-V)$ be a probability measure on $\R^d$ such that~\eqref{eq:LGE_pi} holds.
    Then, under $\pi$, the score $\nabla V$ is $\sqrt\beta$-sub-Gaussian.
\end{restatable}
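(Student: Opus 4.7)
The plan is a Herbst-style argument, turning \eqref{eq:LGE_pi} into sub-Gaussianity via a quadratic differential inequality for the log-moment generating function. Fix a unit vector $e \in \R^d$ and set
\begin{align*}
Z \deq \langle e, \nabla V\rangle,\qquad M(\lambda) \deq \E_\pi \exp(\lambda Z),\qquad L(\lambda) \deq \log M(\lambda)\,.
\end{align*}
The claim reduces to showing $L(\lambda) \le \beta\lambda^2/2$ for every $\lambda \in \R$.

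First I would apply \eqref{eq:LGE_pi} with the test function $f \deq \exp(\lambda Z)$. The bridge between LGE and the score is the Gaussian integration-by-parts identity $\E_\pi \nabla f = \E_\pi[f\nabla V]$, which follows directly from $\pi \propto e^{-V}$ together with decay at infinity. Taking the inner product with $e$ yields $\langle e, \E_\pi \nabla f\rangle = \E_\pi[Z f] = M'(\lambda)$, hence $\|\E_\pi\nabla f\|^2 \ge (M'(\lambda))^2$. Combining this with the routine computation $\ent_\pi(f) = \lambda M'(\lambda) - M(\lambda)L(\lambda)$, the inequality \eqref{eq:LGE_pi} becomes
\begin{align*}
(M'(\lambda))^2 \;\le\; 2\beta \, M(\lambda)\,\{\lambda M'(\lambda) - M(\lambda) L(\lambda)\}\,.
\end{align*}

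Dividing through by $M(\lambda)^2$ produces the quadratic differential inequality
\begin{align*}
(L'(\lambda))^2 \;-\; 2\beta\,\lambda\, L'(\lambda) \;+\; 2\beta L(\lambda) \;\le\; 0\,.
\end{align*}
The key observation is now purely algebraic: the left-hand side is a quadratic in the \emph{real} number $L'(\lambda)$, so its non-positivity forces the discriminant $4\beta^2\lambda^2 - 8\beta L(\lambda)$ to be non-negative, which rearranges exactly to $L(\lambda) \le \beta\lambda^2/2$. Since $e$ is an arbitrary unit vector, this proves that $\nabla V$ is $\sqrt\beta$-sub-Gaussian under $\pi$.

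The main obstacle is technical rather than conceptual: one must justify the integration by parts and the finiteness/differentiability of $M(\lambda)$ for the specific test function $f = \exp(\lambda Z)$, since $Z = \langle e, \nabla V\rangle$ is generally unbounded and a priori $M(\lambda)$ could be infinite. I would address both points by a standard truncation, replacing $Z$ with a smooth bounded approximation $Z_R \deq \varphi_R(Z)$ for a cutoff $\varphi_R$, running the argument above to obtain a bound on the truncated MGF with constants uniform in $R$, and then passing to the limit $R \to \infty$ by Fatou's lemma or monotone convergence.
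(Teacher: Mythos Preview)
Your argument is correct. The paper's proof takes a closely related but differently packaged route: it first rewrites \eqref{eq:LGE_pi} (via the same integration-by-parts identity you invoke) as $\norm{\E_\mu\nabla V}^2 \le 2\beta\,\KL(\mu\mmid\pi)$ for all $\mu\ll\pi$, and then applies the Bobkov--G\"otze/Donsker--Varadhan variational principle $L(\lambda) = \sup_\mu\{\lambda\,\E_\mu V_e - \KL(\mu\mmid\pi)\}$, bounding the supremum by $\sup_{s\ge 0}\{|\lambda|\sqrt{2\beta s} - s\} = \beta\lambda^2/2$. Your Herbst-style calculation is the ``direct'' version of the same idea: plugging the exponential tilt $\D\mu_\lambda \propto e^{\lambda Z}\,\D\pi$ into the rewritten inequality gives exactly your quadratic $(L')^2 - 2\beta\lambda L' + 2\beta L \le 0$ (since $\E_{\mu_\lambda} Z = L'$ and $\KL(\mu_\lambda\mmid\pi) = \lambda L' - L$), and your discriminant step is algebraically identical to the paper's scalar optimization. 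The variational route has the mild advantage that the bound is applied to arbitrary (nice) $\mu$, so one never needs $M(\lambda)<\infty$ a priori and the truncation you outline becomes unnecessary; conversely, your formulation avoids the Donsker--Varadhan machinery entirely and is self-contained.
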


Note that a classical integration-by-parts identity shows that the condition $\nabla^2 V \preceq \beta I$ implies $\E_\pi[\norm{\nabla V}^2] = \E_\pi[\Delta V] \le \beta d$.
A concentration bound for $\norm{\nabla V}$ was previously derived in~\cite{leeshentian2020gradientconcentration} in the context of log-concave sampling, under the additional structural assumption of \emph{convexity} of $V$, and with subexponential tails rather than sub-Gaussian.
In contrast, our argument requires only the smoothness of $V$ (more precisely, through~\eqref{eq:LGE_pi}) and recovers the correct behavior for the Gaussian.
See \S\ref{sec:implications_stationary} for further discussion.

\paragraph*{Organization.}
We begin in \S\ref{sec:prelim} by collecting relevant preliminary material on information theory and in particular on the shifted composition rule (Theorem~\ref{thm:scr}).
In \S\ref{sec:disc}, we apply the shifted composition rule to establish sharp shift Harnack inequalities for discretized diffusions, from which the corresponding results for the continuous-time diffusion follow immediately as a limiting case.
In \S\ref{sec:cont}, we provide a direct continuous-time perspective that yields two alternative derivations of these inequalities, one via stochastic calculus and the other via direct differentiation of the R\'enyi divergence. The former is a refinement of the original argument of~\cite{Wang14ShiftHarnack} that achieves optimal constants; the latter is new. Finally, in \S\ref{sec:harnack}, we provide background on shift Harnack inequalities and duality, and we prove Theorems~\ref{thm:equiv} and~\ref{thm:gradV_concentration}.

\paragraph*{Setting.}
For the reverse transport inequalities in \S\ref{sec:disc} and \S\ref{sec:cont}, we work with It\^o diffusions $\{X_t\}_{t\ge 0}$ of the form
\begin{align*}
    \D X_t = b(X_t) \, \D t + \sigma \, \D B_t\,, \qquad\, X_0 = x\,,
\end{align*}
in the uniformly elliptic setting that $\sigma \sigma^\T \succeq \lambda I$ for some $\lambda > 0$. We assume that $b$ is $L$-Lipschitz; this implies in particular that there is a unique strong solution to the SDE\@. This level of generality comes at no cost because the arguments are not made any more difficult. The results for the Langevin diffusion follow immediately as a special case.

\section{Preliminaries}\label{sec:prelim}

Here we collect background material on information theory (\S\ref{ssec:prelim:info}), the shifted composition rule (\S\ref{ssec:scr}), and the convexity principle (\S\ref{ssec:prelim:convexity}). The latter two concepts were developed in~\cite{scr1}.

\subsection{Information theory}\label{ssec:prelim:info}

As discussed in \S\ref{sec:intro}, shift Harnack inequalities are equivalent to reverse transport inequalities involving R\'enyi divergences. These divergences are defined as follows.

\begin{defin}[R\'enyi divergence]
    Let $q \ge 1$. The \emph{R\'enyi divergence} of order $q$ between probability measures $\mu$, $\nu$ is defined to be
    \begin{align}\label{eq:renyi}
        \Ren_q(\mu \mmid \nu)
        &\deq \frac{1}{q-1} \log \int \bigl(\frac{\D\mu}{\D\nu}\bigr)^q \, \D \nu\,,
    \end{align}
    if $\mu \ll \nu$, and is defined to be $\infty$ otherwise.
\end{defin}

We make use of the following relations between R\'enyi divergences and other information divergences:
\begin{itemize}
    \item \underline{KL divergence.} For $q=1$, the definition~\eqref{eq:renyi} is interpreted in the limiting sense and equals the Kullback--Leibler (KL) divergence
    \begin{align*}
        \KL(\mu \mmid \nu)
        \deq \Ren_1(\mu \mmid \nu)
        \deq \int \Bigl(\frac{\D \mu}{\D\nu} \log \frac{\D\mu}{\D\nu}\Bigr) \, \D \nu\,.
    \end{align*}
    \item \underline{Chi-squared divergence.} For $q=2$, the R\'enyi divergence is related to the chi-squared divergence
    \begin{align*}
        \chi^2(\mu\mmid \nu)
        &\deq \var_\nu \frac{\D\mu}{\D\nu}
        = \int \Bigl(\frac{\D\mu}{\D\nu}\Bigr)^2 \, \D \nu - 1\,,
    \end{align*}
    via the expression $\Ren_2(\mu\mmid \nu) = \exp(1+\chi^2(\mu \mmid \nu))$.
    \item \underline{$f$-divergences.} For $q > 1$, the R\'enyi divergence is related to the $f$-divergence
    \begin{align}\label{eq:Hell_q}
        \Hell_q(\mu \mmid \nu)
        &\deq \int \bigl( \frac{\D\mu}{\D\nu}\bigr)^q \, \D \nu - 1
    \end{align}
    via the expression $\Ren_q(\mu \mmid \nu) = \frac{1}{q-1} 
        \log(1+\Hell_q(\mu\mmid \nu))$.
\end{itemize}

We refer the reader to the surveys~\cite{van2014renyi,mironov2017renyi} for further discussion of R\'enyi divergences and their properties. For the convenience of the reader, the following theorem collects the basic properties of R\'enyi divergences that we employ in this paper. 

\begin{theorem}\label{thm:renyi_prop}
    Let $q \ge 1$ and let $\mu$, $\nu$ be probability measures.
    \begin{enumerate}
        \item (Positivity) $\Ren_q(\mu \mmid \nu) \ge 0$, with equality if and only if $\mu = \nu$.
        \item (Monotonicity) R\'enyi divergences are increasing in the order, i.e., $q \mapsto \Ren_q(\mu \mmid \nu)$ is increasing.
        \item (Data processing inequality) For any Markov kernel $P$, it holds that $\Ren_q(\mu P \mmid \nu P) \le \Ren_q(\mu \mmid \nu)$.
        \item (Convexity) The divergences $\KL$ and $\Hell_q$ (for $q > 1$) are jointly convex.
        \item (Gaussian identity) $\Ren_q(\cN(x,\Sigma) \mmid \cN(y,\Sigma)) = \frac{q}{2}\,\langle x-y, \Sigma^{-1}\,(x-y)\rangle$.
    \end{enumerate}
\end{theorem}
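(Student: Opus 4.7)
The plan is to verify each of the five properties using standard tools from information theory, mostly reducing to Jensen's inequality, convexity of $f$-divergences, or direct Gaussian computation.

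For positivity (item 1), I would apply Jensen's inequality to $x \mapsto x^q$ for $q > 1$, giving $\int (\D\mu/\D\nu)^q\,\D\nu \ge (\int (\D\mu/\D\nu)\,\D\nu)^q = 1$, with equality iff $\D\mu/\D\nu$ is $\nu$-a.s.\ constant (hence $\mu = \nu$). The case $q = 1$ uses the analogous bound for $x \mapsto x\log x$ (equivalently, $x\log x \ge x - 1$). For monotonicity (item 2), the key observation is that $\psi(q) \deq \log\E_\nu[(\D\mu/\D\nu)^q]$ is the cumulant generating function of $\log(\D\mu/\D\nu)$ under $\nu$, and hence convex in $q$; combined with $\psi(1) = 0$, convexity forces the difference quotient $q \mapsto \psi(q)/(q-1)$ to be non-decreasing on $(1,\infty)$, which is precisely monotonicity of $\Ren_q$.

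For data processing (item 3) and convexity (item 4), I would handle $\KL$ and $\Hell_q$ simultaneously by noting that both are $f$-divergences, with $f(x) = x\log x$ and $f(x) = x^q - 1$ respectively; both choices of $f$ are convex with $f(1) = 0$. Joint convexity in $(\mu,\nu)$ then follows from convexity of the perspective $(a,b) \mapsto b\,f(a/b)$, and the data processing inequality follows from the standard disintegration argument: given a kernel $P$, writing $\mu P$ and $\nu P$ as mixtures indexed by the $P$-output and applying joint convexity gives $D_f(\mu P \mmid \nu P) \le D_f(\mu \mmid \nu)$. The DPI then lifts to $\Ren_q$ since $\Ren_q = \frac{1}{q-1}\log(1+\Hell_q)$ is a monotone transformation of $\Hell_q$, and likewise for $q = 1$ directly.

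For the Gaussian identity (item 5), I would compute directly. The log-density ratio between $\cN(x,\Sigma)$ and $\cN(y,\Sigma)$ at the point $z$ equals $\langle x - y, \Sigma^{-1}\,(z - \tfrac{x+y}{2})\rangle$, so $(\D\cN(x,\Sigma)/\D\cN(y,\Sigma))^q$ is the exponential of an affine function of $z$; integrating against $\cN(y,\Sigma)$ and completing the square yields $\exp(\tfrac{q(q-1)}{2}\,\langle x-y,\Sigma^{-1}\,(x-y)\rangle)$, and applying $\tfrac{1}{q-1}\log$ gives the stated formula. There is no substantive obstacle here: this theorem collects standard facts for the reader's convenience, and the only care required is to treat the $q = 1$ limit (via L'H\^opital's rule) and to keep track of equality cases.
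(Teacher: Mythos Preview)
Your proposal is correct and follows the standard arguments. Note, however, that the paper does not actually prove Theorem~\ref{thm:renyi_prop}: it is stated as a collection of well-known facts, with the reader referred to the surveys~\cite{van2014renyi,mironov2017renyi} for details. So there is no ``paper's own proof'' to compare against; your outline is exactly the kind of verification one would supply if asked to fill in the details, and each step (Jensen for positivity, convexity of the cumulant generating function for monotonicity, the $f$-divergence/perspective argument for joint convexity and DPI, and the direct Gaussian MGF computation for item~5) is sound.
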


\subsection{Shifted composition rule}\label{ssec:scr}

Next, we recall the \emph{shifted composition rule} from~\cite{scr1}, which will form the basis of our arguments in \S\ref{sec:disc}. Write $\Coup(\mu,\nu)$ for the set of couplings of two probability measures $\mu \in \mc P(\Omega_1)$, $\nu \in \mc P(\Omega_2)$, i.e., the set of probability measures $\gamma \in \mc P(\Omega_1\times \Omega_2)$ whose marginals are $\mu$ and $\nu$ respectively.

\begin{theorem}[Shifted composition rule]\label{thm:scr}
    Let $X$, $X'$, $Y$ be three jointly defined random variables on a standard probability space $\Omega$.
    Let $\bs\mu$, $\bs \nu$ be two probability measures over $\Omega$, with superscripts denoting the laws of random variables under these measures.
    \begin{enumerate}
        \item (Shifted chain rule) It holds that
        \begin{align*}
            \KL(\bs\mu^Y \mmid \bs \nu^Y)
            &\le \KL(\bs\mu^{X'} \mmid \bs \nu^X) + \inf_{\gamma \in \Coup(\bs \mu^X, \bs \mu^{X'})} \int \KL(\bs\mu^{Y\mid X=x} \mmid \bs \nu^{Y\mid X=x'}) \,\gamma(\D x, \D x')\,.
        \end{align*}
        \item Let $q \ge 1$. Then, it holds that
        \begin{align*}
            \Ren_q(\bs\mu^Y \mmid \bs \nu^Y)
            &\le \Ren_q(\bs\mu^{X'} \mmid \bs \nu^X) + \inf_{\gamma \in \Coup(\bs \mu^X, \bs \mu^{X'})} {\esssup{\gamma}_{(x,x') \in \Omega \times \Omega}{\Ren_q(\bs\mu^{Y\mid X=x} \mmid \bs \nu^{Y\mid X=x'})}}\,.
        \end{align*}
    \end{enumerate}
\end{theorem}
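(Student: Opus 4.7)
The plan is to construct auxiliary joint distributions on an extended product space and apply the data processing inequality together with a tilting trick. Fix a coupling $\gamma \in \Coup(\bs\mu^X, \bs\mu^{X'})$, and without loss of generality assume $\bs\nu^X \ll \bs\mu^{X'}$ (otherwise $\KL(\bs\mu^{X'} \mmid \bs\nu^X) = +\infty$ and the bound is trivial); write $r \deq \tfrac{d\bs\nu^X}{d\bs\mu^{X'}}$. Define probability measures $P, Q$ on $\Omega^3$ with coordinates $(x, x', y)$ by
\[
P(dx, dx', dy) \deq \gamma(dx, dx') \, \bs\mu^{Y \mid X = x}(dy), \qquad Q(dx, dx', dy) \deq r(x')\,\gamma(dx, dx') \, \bs\nu^{Y \mid X = x'}(dy).
\]
The $(x, x')$-marginal of $Q$, namely $\gamma'(dx, dx') \deq r(x')\,\gamma(dx, dx')$, has $X'$-marginal equal to $\bs\nu^X$ by construction, so the $Y$-marginal of $Q$ is $\bs\nu^Y$; likewise the $Y$-marginal of $P$ is $\bs\mu^Y$. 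Data processing applied to the projection onto the $y$-coordinate gives $\KL(\bs\mu^Y \mmid \bs\nu^Y) \le \KL(P \mmid Q)$ and $\Ren_q(\bs\mu^Y \mmid \bs\nu^Y) \le \Ren_q(P \mmid Q)$.

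For the KL case, the Radon--Nikodym derivative factors as $\tfrac{dP}{dQ}(x, x', y) = \tfrac{1}{r(x')} \cdot \tfrac{d\bs\mu^{Y \mid X = x}}{d\bs\nu^{Y \mid X = x'}}(y)$. Taking $\log$ and integrating against $P$ decouples the two factors:
\[
\KL(P \mmid Q) = \int \log\tfrac{1}{r(x')} \, \bs\mu^{X'}(dx') + \int \KL(\bs\mu^{Y \mid X = x} \mmid \bs\nu^{Y \mid X = x'}) \, \gamma(dx, dx').
\]
Since $\tfrac{1}{r} = \tfrac{d\bs\mu^{X'}}{d\bs\nu^X}$, the first term equals $\KL(\bs\mu^{X'} \mmid \bs\nu^X)$; taking the infimum over $\gamma$ yields the shifted chain rule.

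For the Rényi case, raising $\tfrac{dP}{dQ}$ to the $q$-th power and integrating against $Q$ gives
\[
\int \Bigl(\tfrac{dP}{dQ}\Bigr)^{\!q} dQ = \int r(x')^{1-q} \Bigl(\int \Bigl(\tfrac{d\bs\mu^{Y \mid X = x}}{d\bs\nu^{Y \mid X = x'}}\Bigr)^{\!q} d\bs\nu^{Y \mid X = x'}\Bigr) \gamma(dx, dx').
\]
The inner $y$-integral equals $\exp\bigl((q-1)\,\Ren_q(\bs\mu^{Y \mid X = x} \mmid \bs\nu^{Y \mid X = x'})\bigr)$; bounding it by its essential supremum with respect to $\gamma$ (note $\gamma' \ll \gamma$, so esssup over $\gamma'$ is no larger than over $\gamma$), then using $\int r^{1-q} \, d\bs\mu^{X'} = \exp((q-1)\,\Ren_q(\bs\mu^{X'} \mmid \bs\nu^X))$, and finally taking $\log$ and dividing by $q-1$, gives the claimed Rényi inequality after infimizing over $\gamma$.

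The main obstacle is identifying the correct tilting factor: the choice $r(x') = \tfrac{d\bs\nu^X}{d\bs\mu^{X'}}(x')$ is precisely what simultaneously (a) renormalizes the $X'$-marginal of $Q$ to be $\bs\nu^X$, guaranteeing that the $Y$-marginal of $Q$ equals $\bs\nu^Y$ and unlocking data processing with $\bs\nu^Y$ on the right, and (b) makes the $(x, x')$-contribution to the divergence collapse exactly to $\KL(\bs\mu^{X'} \mmid \bs\nu^X)$ (resp.\ $\Ren_q$), independent of the choice of $\gamma$. It is this dual role that allows the infimum over $\gamma$ to act only on the conditional-divergence term.
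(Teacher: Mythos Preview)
The paper does not prove this theorem here; it is quoted from the companion work~\cite{scr1}. Your construction captures the right mechanism---build joint laws on $(x,x',y)$ whose $y$-marginals are $\bs\mu^Y$ and $\bs\nu^Y$, then apply data processing and exploit the product structure of the density ratio---but there is a slip in the absolute continuity direction that leaves a genuine gap.

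You assume $\bs\nu^X \ll \bs\mu^{X'}$ on the grounds that ``otherwise $\KL(\bs\mu^{X'}\mmid\bs\nu^X)=+\infty$''. The implication runs the other way: finiteness of $\KL(\bs\mu^{X'}\mmid\bs\nu^X)$ (or of $\Ren_q$) requires $\bs\mu^{X'}\ll\bs\nu^X$, not the reverse. For instance, take $\bs\mu^{X'}=\delta_0$ and $\bs\nu^X=\tfrac12(\delta_0+\delta_1)$: then $\KL(\bs\mu^{X'}\mmid\bs\nu^X)=\log 2<\infty$ while $\bs\nu^X\not\ll\bs\mu^{X'}$, so your tilting factor $r=\tfrac{\D\bs\nu^X}{\D\bs\mu^{X'}}$ is undefined and the construction of $Q$ fails.

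The fix is to build $Q$ without tilting: set
\[
Q(\D x,\D x',\D y) \deq \bs\nu^X(\D x')\,\gamma^{X\mid X'}(\D x\mid x')\,\bs\nu^{Y\mid X=x'}(\D y),
\]
where $\gamma^{X\mid X'}$ is the disintegration of $\gamma$ along its second marginal (extended arbitrarily on the $\bs\mu^{X'}$-null set). The $Y$-marginal of $Q$ is still $\bs\nu^Y$, and now $\tfrac{\D P}{\D Q}(x,x',y)=s(x')\cdot\tfrac{\D\bs\mu^{Y\mid X=x}}{\D\bs\nu^{Y\mid X=x'}}(y)$ with $s=\tfrac{\D\bs\mu^{X'}}{\D\bs\nu^X}$, which is well-defined under the correct assumption $\bs\mu^{X'}\ll\bs\nu^X$. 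Your KL and R\'enyi computations then go through with $s$ in place of $1/r$; in the R\'enyi case, after bounding the conditional factor by its $\gamma$-essential supremum one uses $\int s^{q-1}\,\D\bs\mu^{X'}=\int s^q\,\D\bs\nu^X=\exp\bigl((q-1)\,\Ren_q(\bs\mu^{X'}\mmid\bs\nu^X)\bigr)$. The remaining steps---data processing to the $y$-marginal and the infimum over $\gamma$---are correct as written.
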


We briefly describe the intuition behind this principle.
We think of $\bs\mu$ and $\bs\nu$ as describing two possible laws of the stochastic process $X\to Y$, and we wish to bound the divergence between the laws of $Y$ under $\bs\mu$ and $\bs\nu$ respectively.
One natural way to do so is to apply the chain rule (for $\KL$) or the composition rule (for $\Ren_q$, $q > 1$), but this does not always lead to a bound that is tight or even non-vacuous.
Instead, the shifted composition rule allows for the introduction of an auxiliary ``shift'', which replaces $X$ with $X'$ under $\bs\mu$ (c.f.\ the first term in the above inequalities), at a price encapsulated by how distinguishable $X$ is from $X'$ (c.f.\ the second term).
As we demonstrate in \S\ref{sec:disc}, the freedom afforded by this auxiliary shift allows for capturing coupling arguments with information-theoretic tools.

\subsection{Convexity principle}\label{ssec:prelim:convexity}

In this paper, we establish inequalities of the form
\begin{align}\label{eq:cvxty_principle_diracs}
    \Ren_q(\delta_x P * \delta_v \mmid \delta_x P) \le \rho(v)\,, \qquad\text{for all}~v\in\R^d\,,
\end{align}
where $P$ is a Markov kernel on $\R^d$.
In this section, we demonstrate that via the joint convexity of $f$-divergences, such inequalities immediately imply more general ones where the Dirac measures are replaced by arbitrary measures.

\begin{lemma}[Convexity principle]\label{lem:cvxty}
    Suppose that an inequality of the form~\eqref{eq:cvxty_principle_diracs} holds.
    Then, for any probability measures $\mu$, $\nu$, $\nu'$ on $\R^d$, the following hold.
    \begin{enumerate}
        \item If $q = 1$ (i.e., $\Ren_q = \KL$), then
        \begin{align*}
            \KL(\mu P * \nu \mmid \mu P * \nu')
            &\le \inf_{\gamma \in \Coup(\nu,\nu')}\int \rho(v-v')\,\gamma(\D v,\D v')\,.
        \end{align*}
        \item If $q > 1$, then
        \begin{align*}
            \Ren_q(\mu P * \nu \mmid \mu P * \nu')
            &\le \inf_{\gamma\in\Coup(\nu,\nu')} \frac{1}{q-1} \log \int \exp\bigl((q-1)\,\rho(v-v')\bigr)\,\gamma(\D v, \D v')\,.
        \end{align*}
    \end{enumerate}
\end{lemma}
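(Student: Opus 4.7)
The plan is to exploit the joint convexity of $f$-divergences together with translation invariance, which lets us reduce the general case to the assumed Dirac-to-Dirac bound~\eqref{eq:cvxty_principle_diracs}.

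Fix any coupling $\gamma \in \Coup(\nu, \nu')$. The first step is to realize both convolutions as mixtures against the same mixing measure $\mu \otimes \gamma$ on $\R^d \times \R^d \times \R^d$:
\begin{align*}
    \mu P * \nu \;=\; \int (\delta_x P * \delta_v)\,\mu(\D x)\,\gamma(\D v, \D v')\,, \qquad
    \mu P * \nu' \;=\; \int (\delta_x P * \delta_{v'})\,\mu(\D x)\,\gamma(\D v, \D v')\,.
\end{align*}
Since $\Hell_q$ (for $q > 1$) and $\KL$ are jointly convex by Theorem~\ref{thm:renyi_prop}, I can apply the joint convexity inequality to get
\begin{align*}
    \Hell_q(\mu P * \nu \mmid \mu P * \nu') \le \int \Hell_q(\delta_x P * \delta_v \mmid \delta_x P * \delta_{v'}) \,\mu(\D x)\,\gamma(\D v, \D v')\,,
\end{align*}
and analogously for $\KL$ when $q = 1$.

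The second step is a translation-invariance reduction. Since $\delta_x P * \delta_v$ and $\delta_x P * \delta_{v'}$ are pushforwards of $\delta_x P$ under $y \mapsto y + v$ and $y \mapsto y + v'$ respectively, applying the bijective change of variables $y \mapsto y - v'$ (and invoking the data processing inequality in both directions, so equality holds) gives
\begin{align*}
    \Hell_q(\delta_x P * \delta_v \mmid \delta_x P * \delta_{v'}) = \Hell_q(\delta_x P * \delta_{v-v'} \mmid \delta_x P)\,,
\end{align*}
and likewise for $\KL$. The assumed bound~\eqref{eq:cvxty_principle_diracs} in the form $\Hell_q(\delta_x P * \delta_w \mmid \delta_x P) \le \exp((q-1)\rho(w)) - 1$ (obtained by inverting the relation $\Ren_q = \frac{1}{q-1}\log(1 + \Hell_q)$) then gives
\begin{align*}
    \Hell_q(\mu P * \nu \mmid \mu P * \nu') \le \int \bigl(\exp((q-1)\,\rho(v-v')) - 1\bigr)\,\gamma(\D v, \D v')\,,
\end{align*}
where I used that $\mu$ is a probability measure to integrate out $x$. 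For $q = 1$, the analogous calculation with $\KL$ gives $\KL(\mu P * \nu \mmid \mu P * \nu') \le \int \rho(v - v')\,\gamma(\D v, \D v')$.

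The third step is to convert back to $\Ren_q$ for $q > 1$ via $\Ren_q = \frac{1}{q-1}\log(1 + \Hell_q)$, yielding
\begin{align*}
    \Ren_q(\mu P * \nu \mmid \mu P * \nu') \le \frac{1}{q-1}\log\int \exp\bigl((q-1)\,\rho(v-v')\bigr)\,\gamma(\D v, \D v')\,.
\end{align*}
Finally, taking the infimum over $\gamma \in \Coup(\nu, \nu')$ on the right-hand side yields both claims.

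The argument is essentially routine once the right convex combination is identified. The only subtle point---and the main thing to be careful about---is that joint convexity must be applied at the level of $\Hell_q$ rather than $\Ren_q$ (the latter is not jointly convex), which is why the exponential appears in the final bound. The translation-invariance reduction is likewise the reason the assumed bound can be invoked pointwise in $(v, v')$ through the difference $v - v'$.
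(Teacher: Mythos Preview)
Your proof is correct and follows essentially the same approach as the paper: represent both convolutions as mixtures over the common measure $\mu\otimes\gamma$, apply joint convexity of $\KL$ (respectively $\Hell_q$), reduce via translation invariance to the Dirac bound~\eqref{eq:cvxty_principle_diracs}, and finally convert $\Hell_q$ back to $\Ren_q$. You even flag the key subtlety the paper emphasizes, namely that joint convexity must be invoked for $\Hell_q$ rather than $\Ren_q$.
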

\begin{proof}
    First, note that since $x\mapsto x - v'$ is a diffeomorphism, the data-processing inequality (see Theorem~\ref{thm:renyi_prop}) and~\eqref{eq:cvxty_principle_diracs} imply that
    \begin{align*}
        \KL(\delta_x P * \delta_v \mmid \delta_x P * \delta_{v'})
        &= \KL(\delta_x P * \delta_{v-v'} \mmid \delta_x P)
        \le \rho(v-v')\,, \qquad\text{for all}~v,v'\in\R^d\,.
    \end{align*}
    Let $\gamma$ be a coupling of $\nu$ and $\nu'$.
    By the joint convexity of the KL divergence (Theorem~\ref{thm:renyi_prop}),
    \begin{align*}
        \KL(\mu P * \nu \mmid \mu P * \nu')
        &= \KL\Bigl(\int \delta_x P * \delta_v \,\mu(\D x)\,\gamma(\D v, \D v') \Bigm\Vert \int \delta_x P * \delta_{v'} \, \mu(\D x)\,\gamma(\D v, \D v')\Bigr) \\
        &\le \int \KL(\delta_x P * \delta_v \mmid \delta_x P * \delta_{v'}) \,\mu(\D x)\,\gamma(\D v, \D v')
        \le \int \rho(v-v')\,\gamma(\D v, \D v')\,.
    \end{align*}
    The proof for $\Ren_q$, $q > 1$ is similar, except that we use the joint convexity of $\Hell_q$ defined in~\eqref{eq:Hell_q} (note that $\Ren_q$ itself is not jointly convex).
\end{proof}

\begin{remark}[Refinements]\label{rem:convexity-refinements}
    Although we do not dwell on this point here, we briefly remark that several refinements are possible.
    In our prior work~\cite{scr1}, we showed that by taking advantage of the convexity of ${(\Hell_q + 1)}^{1/q}$ w.r.t.\ its first argument or the convexity of $\Ren_q$ w.r.t.\ its second argument, one can refine the convexity principle for the case $q > 1$, see \S 3.3.
    The same can be applied here.

    In a complementary direction, one can incorporate dependence between the initial condition $x$ and the terminal shift $v$, by generalizing from measures of the form $\mu P \ast \nu$ to measures of the form $\int P(x,\cdot) * Q(x, \cdot) \,\mu(\D x) = \int \delta_x P * \delta_v\, \mu(\D x)\,Q(x,\D v)$ for some Markov kernel $Q$ on $\R^d$. Indeed, let $\gamma_x$ be a coupling of two kernels $Q(x,\cdot)$ and $Q'(x,\cdot)$. Then, by the joint convexity of the KL divergence,
    \begin{align*}
        &\KL\Bigl(\int \delta_x P * \delta_v\,\mu(\D x)\,Q(x,\D v) \Bigm\Vert \int \delta_x P * \delta_{v'} \,\mu(\D x)\,Q'(x, \D v') \Bigr) \\
        &\qquad = \KL\Bigl(\int \delta_x P *\delta_v\, \mu(\D x)\,\gamma_x(\D v, \D v') \Bigm\Vert \int \delta_x P * \delta_{v'} \, \mu(\D x)\,\gamma_x(\D v, \D v')\Bigr) \\
        &\qquad \le \int \KL(\delta_x P * \delta_v \mmid \delta_x P * \delta_{v'}) \,\mu(\D x)\,\gamma_x(\D v, \D v')
        \le \int \rho(v-v') \,\mu(\D x)\,\gamma_x(\D v, \D v')\,.
    \end{align*}
    This observation can also be combined with the refinements mentioned in the preceding paragraph.
    However, for clarity of exposition, we focus on the simple version of the convexity principle as stated in Lemma~\ref{lem:cvxty}.   
\end{remark}

\section{Discrete-time arguments}\label{sec:disc}

\subsection{General principle}\label{ssec:disc:general}

In analog to the one-step-to-multi-step bound for backward regularity in~\cite[\S3.2]{scr1}, we now state a one-step-to-multi-step reduction for forward regularity. We remark that for forward regularity, the Markov kernel $P$ need only satisfy a $1$-step regularity bound---whereas for the backward regularity studied in~\cite{scr1}, it is necessary to additionally assume that $P$ is Wasserstein-Lipschitz. For simplicity, we consider here Dirac initializations and Dirac convolutions; this extends to general measures via the convexity principle in \S\ref{ssec:prelim:convexity}. 

\begin{theorem}[One-step-to-multi-step for forward regularity]\label{thm:one-to-multi}
Let $q\ge 1$. Suppose that $P$ is a Markov kernel on $\R^d$ satisfying the following $1$-step regularity bound for Dirac initializations:
\begin{align}
    \Ren_q( \delta_x P \ast \delta_v \mmid \delta_{x+w} P) \leq ( c_1\, \|w\| + c_2\,\|v-w\|)^2\,, \qquad \forall x,v,w \in \R^d\,.\label{eq-thm1:one-step}
\end{align}
Then for all $x,v \in \R^d$,
\begin{align}
    \Ren_q( \delta_x P^N \ast \delta_{v} \mmid \delta_x P^N) \leq \frac{1 - {(1 - c_1/c_2)}^2}{1 - {(1 - c_1/c_2)}^{2N}} \, c_2^2\,\|v\|^2\,.
    \label{eq-thm1:multi-step}
\end{align}
\end{theorem}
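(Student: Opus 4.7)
The plan is to induct on $N$, combining the shifted composition rule (Theorem~\ref{thm:scr}) with the one-step hypothesis~\eqref{eq-thm1:one-step}. Write $r \defeq 1 - c_1/c_2$ and $A_N \defeq (1-r^2)\,c_2^2/(1-r^{2N})$, so the goal is $\Ren_q(\delta_x P^N \ast \delta_v \mmid \delta_x P^N) \le A_N\,\|v\|^2$ for all $x, v$. The base case $N=1$ is immediate from~\eqref{eq-thm1:one-step} by taking $w = 0$, since $A_1 = c_2^2$.

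For the inductive step, I view the $N$-step trajectory $x \to X_{N-1} \to X_N$ as a two-stage process and apply SCR. Let $X \defeq X_{N-1}$ under both measures; under $\bs\mu$ let $Y \defeq X_N + v$, and under $\bs\nu$ let $Y \defeq X_N$; and introduce an auxiliary shift $w \in \R^d$ via the deterministic coupling $X' \defeq X + w$. Then $\bs\mu^{X'} = \delta_x P^{N-1} \ast \delta_w$, $\bs\nu^X = \delta_x P^{N-1}$, $\bs\mu^{Y \mid X = z} = \delta_z P \ast \delta_v$, and $\bs\nu^{Y \mid X = z'} = \delta_{z'} P$. The first term of SCR equals $\Ren_q(\delta_x P^{N-1} \ast \delta_w \mmid \delta_x P^{N-1}) \le A_{N-1}\,\|w\|^2$ by the inductive hypothesis; the second term is bounded, uniformly over the support of $\gamma$ (which handles both the $q=1$ integral and the $q>1$ essential supremum), by $\Ren_q(\delta_z P \ast \delta_v \mmid \delta_{z+w} P) \le (c_1\|w\| + c_2\|v-w\|)^2$ via~\eqref{eq-thm1:one-step}. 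Combining,
\begin{align*}
    \Ren_q(\delta_x P^N \ast \delta_v \mmid \delta_x P^N) \le A_{N-1}\,\|w\|^2 + (c_1\,\|w\| + c_2\,\|v-w\|)^2 \qquad \text{for every } w \in \R^d.
\end{align*}

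It remains to optimize over $w$ and solve the recursion. Since the right-hand side depends on $w$ only through $\|w\|$ and $\|v-w\|$---which by the triangle inequality are jointly minimized (for a fixed value of $\|w\|$) by placing $w$ on the segment from $0$ to $v$---it suffices to set $w = tv$ with $t \in [0,1]$ and minimize the scalar quadratic $A_{N-1}\,t^2 + (c_1 t + c_2(1-t))^2$. This yields the recursion $A_N = A_{N-1}\,c_2^2 / (A_{N-1} + (c_2-c_1)^2)$; substituting $u_N \defeq c_2^2/A_N$ linearizes it to $u_N = 1 + r^2\,u_{N-1}$ with $u_1 = 1$, whose closed form $u_N = (1 - r^{2N})/(1-r^2)$ rearranges to the claimed formula for $A_N$. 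The conceptual heart of the argument is the splitting of the terminal shift $v$ into an auxiliary shift $w$ applied at time $N-1$ (handled by induction) plus a residual $v-w$ applied at the final step (handled by the one-step bound); once this decomposition is identified, the remainder is a routine scalar optimization together with a first-order linear recurrence, and the main expected obstacle is simply verifying that the SCR terms arrange themselves cleanly under the deterministic coupling $X' = X + w$.
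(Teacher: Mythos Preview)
Your proposal is correct and uses essentially the same mechanism as the paper: repeated application of the shifted composition rule with the deterministic coupling that pairs $z$ with $z+w$, together with the one-step hypothesis. The only difference is organizational. The paper unrolls all $N$ applications of SCR at once to obtain
\[
\Ren_q(\delta_x P^N * \delta_v \mmid \delta_x P^N)\le \sum_{n=0}^{N-1}\bigl(c_1\|v_n\|+c_2\|v_{n+1}-v_n\|\bigr)^2
\]
and then solves the resulting global LQR-type optimization over the full shift sequence $(v_0,\dots,v_N)$; you instead proceed by induction, optimize a single shift $w$ at the last step, and obtain the scalar recursion $A_N = A_{N-1}c_2^2/(A_{N-1}+(c_2-c_1)^2)$, which you linearize via $u_N=c_2^2/A_N$. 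The two computations are equivalent (the problem has a dynamic-programming structure, so greedy one-step optimization coincides with the global optimum), and your inductive version arguably shortens the algebra.
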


Although the condition~\eqref{eq-thm1:one-step} may seem mysterious at first sight, it is motivated by the one-step regularity bound which is satisfied by discretized It\^o diffusions---as it is trivial to verify in that setting, since it reduces to a simple divergence calculation between Gaussians.
Theorem~\ref{thm:one-to-multi} then immediately implies tight forward regularity bounds for It\^o diffusions (in both discrete and continuous settings). See \S\ref{ssec:disc:ito} for details.

We also remark that the condition~\eqref{eq-thm1:one-step} admits an interesting interpretation when $v = w$: it measures the extent to which convolution with $\delta_w$ commutes with the Markov kernel $P$.

\begin{proof}[Proof of Theorem~\ref{thm:one-to-multi}]
    Consider any sequence of shifts $v_0, \dots, v_N$ satisfying $v_0 = 0$ and $v_N = v$.
    For all $n \in \{0, \dots, N-1\}$, 
    \begin{align}
        \Ren_q( \delta_x P^{n+1} \ast \delta_{v_{n+1}} \mmid \delta_x P^{n+1})
        &\leq
        \Ren_q( \delta_x P^n \ast \delta_{v_n} \mmid \delta_x P^n)
        +
        \sup_{z\in\R^d}
        \Ren_q( \delta_{z} P \ast \delta_{v_{n+1}} \mmid \delta_{z + v_n} P) \nonumber
        \\
        &\leq
        \Ren_q( \delta_x P^n \ast \delta_{v_n} \mmid \delta_x P^n)
        +
        ( c_1\, \|v_n\| + c_2\, \|v_{n+1} - v_n\|)^2
        \,.\label{eq-pf-thm1:claim}
    \end{align}
    Above, the second inequality uses the one-step regularity~\eqref{eq-thm1:one-step}. The first inequality is by an application of the shifted composition rule (Theorem~\ref{thm:scr}) where $\bs{\mu}$ is the joint distribution under which $X \sim \delta_x P^n \ast \delta_{v_{n+1}}$, $X' \sim \delta_x P^n \ast \delta_{v_n}$, and $Y \sim \delta_x P^{n+1} \ast \delta_{v_{n+1}}$; and $\bs{\nu}$ is the joint distribution under which $X \sim \delta_x P^n$ and $Y \sim \delta_x P^{n+1}$. In this application of the shifted composition rule, we simplify the infimum over couplings $\gamma \in \Coup(\bs{\mu}^{X}, \bs{\mu}^{X'}) = \Coup(\delta_x P^n \ast \delta_{v_{n+1}}, \delta_x P^n \ast \delta_{v_n})$ by taking the deterministic coupling $\gamma$ which pairs $(z + v_{n+1}, z + v_n)$. 
    \par Iterating the inequality~\eqref{eq-pf-thm1:claim} yields
    \begin{align*}
        \Ren_q( \delta_x P^N \ast \delta_{v_N} \mmid \delta_x P^N)
        \leq
        \Ren_q( \delta_x \ast \delta_{v_0} \mmid \delta_x)
        + 
        \sum_{n=0}^{N-1} ( c_1\, \|v_n\| + c_2\, \|v_{n+1} - v_n\| )^2\,.
    \end{align*}
    By simplifying the R\'enyi terms using the boundary conditions $v_0 = 0$ and $v_N = v$, and optimizing over the intermediate shifts $v_1, \dots, v_{N-1}$, we conclude
    \begin{align}
        \Ren_q( \delta_x P^N \ast \delta_{v} \mmid \delta_x P^N)
        \leq
        \inf_{\substack{v_0, \dots, v_N \\ \text{s.t. }v_0 = 0, \, v_N = v} } \sum_{n=0}^{N-1} ( c_1\, \|v_n\| + c_2\, \|v_{n+1} - v_n\|)^2\,.
    \end{align}
    Reparameterizing to a sequence $a_0 \leq \cdots \leq a_n$ via $v_n = a_n v$ yields
    \begin{align}
        \Ren_q( \delta_x P^N \ast \delta_{v} \mmid \delta_x P^N)
        \leq
        \|v\|^2 \, \inf_{\substack{a_0 \leq \cdots \leq a_N  \\ \text{s.t. }a_0 = 0, \, a_N = 1} } \sum_{n=0}^{N-1} \big( c_1\, a_n + c_2\, (a_{n+1} - a_n) \big)^2\,.
    \end{align}
    Further reparameterizing to $b_{n+1} = a_{n+1} - ra_{n}$ where $r \deq 1 - c_1/c_2$ yields
    \begin{align}
        \Ren_q( \delta_x P^N \ast \delta_{v} \mmid \delta_x P^N)
        \leq
        c_2^2\, \|v\|^2 \, \inf_{\substack{b_1, \dots, b_N  \\ \text{s.t. } \sum_{n=1}^N r^{N-n}b_n = 1 \\ b_{n+1} \geq (1-r) \sum_{i=1}^n b_i r^{n-i}, \;\; \forall n < N }} \sum_{n=1}^N b_n^2 \,.
    \end{align}
    This optimization is easily solved in closed form (in the control theory literature this falls under the class of ``LQR'' problems). Even without the inequality constraints, the unique optimal solution is $b_i = r^{N-i}\, (1 - r^2)/(1 - r^{2N})$, yielding the optimal value $
    (1 - r^2)/(1 - r^{2N})$. Plugging this in completes the proof.
\end{proof}

\subsection{Forward regularity of It\^o diffusions}\label{ssec:disc:ito}

Here we illustrate how the techniques developed in \S\ref{ssec:disc:general} immediately yield tight forward regularity bounds for It\^o diffusions and their discretizations. We show tightness in \S\ref{app:tightness}. This is the first tight regularity bound for (continuous) It\^o diffusions, and to our knowledge, it is moreover the first forward regularity bound in the literature for discretized It\^o diffusions.

\par In what follows, consider the It\^o SDE setting described at the end of \S\ref{sec:intro}, let $(P_t)_{t \geq 0}$ denote the associated semigroup, and let $\hat{P}_h$ denote the Markov kernel corresponding to the discretized diffusion with time step $h > 0$, i.e., 
\begin{align}
    \hat{P}_h(x,\cdot) = \cN(x + h\,b(x), \,h\, \sigma \sigma^\T)\,. 
\end{align}

\begin{theorem}[Forward regularity for discretized It\^o diffusions]\label{thm:reg-disc} 
Denote $r \deq 1 - Lh$. For any step size $h < 1/L$, any number of steps $N \in \N$, any initialization measure $\mu$, and any convolution measures $\nu,\nu'$, it holds that
    \begin{align*}
        \KL( \mu \hat{P}_h^N \ast \nu \mmid \mu \hat{P}_h^N \ast \nu')
        \leq
        \frac{1 - r^2}{2\lambda h\,(1 - r^{2N})}\, W_2^2(\nu,\nu')\,,
    \end{align*}
    and for any $q \ge 1$,
    \begin{align}
        \Ren_q( \mu \hat{P}_h^N \ast \nu \mmid \mu \hat{P}_h^N \ast \nu')
        \leq \inf_{\gamma \in \Coup(\nu,\nu')} \frac{1}{q-1} \log \int \exp\Bigl( \frac{q\,(q-1)\,(1 - r^2)}{2\lambda h \,(1 - r^{2N})}\,\norm{v-v'}^2\Bigr)\, \gamma(\D v, \D v') \,.
    \end{align}
\end{theorem}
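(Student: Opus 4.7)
The plan is to decompose the proof into three ingredients: a one-step Gaussian divergence computation, the one-step-to-multi-step reduction of Theorem~\ref{thm:one-to-multi}, and the convexity principle of Lemma~\ref{lem:cvxty}. The main conceptual point is that all substantive work happens at the one-step level, where the problem reduces to a transparent Gaussian calculation; the remaining steps are black-box applications of tools already developed in~\S\ref{sec:prelim} and~\S\ref{ssec:disc:general}.

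First I would verify the one-step hypothesis~\eqref{eq-thm1:one-step} for the kernel $\hat{P}_h$ with constants $c_1 = Lh\,\sqrt{q/(2\lambda h)}$ and $c_2 = \sqrt{q/(2\lambda h)}$. Since
\begin{align*}
    \delta_x \hat{P}_h \ast \delta_v = \cN\bigl(x + h\,b(x) + v,\; h\sigma\sigma^\T\bigr)\,, \qquad \delta_{x+w}\hat{P}_h = \cN\bigl(x + w + h\,b(x+w),\; h\sigma\sigma^\T\bigr)\,,
\end{align*}
the Gaussian identity from Theorem~\ref{thm:renyi_prop} together with $\sigma\sigma^\T\succeq \lambda I$ and the Lipschitz bound $\norm{b(x)-b(x+w)}\le L\norm w$ yields
\begin{align*}
    \Ren_q(\delta_x \hat{P}_h \ast \delta_v \mmid \delta_{x+w}\hat{P}_h)
    \le \frac{q}{2\lambda h}\,\norm{v - w + h\,(b(x)-b(x+w))}^2
    \le \frac{q}{2\lambda h}\,\bigl(\norm{v-w} + Lh\,\norm w\bigr)^2\,,
\end{align*}
which is precisely~\eqref{eq-thm1:one-step} with the claimed constants. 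Note $c_1/c_2 = Lh$, so the contraction parameter $r$ of Theorem~\ref{thm:one-to-multi} is exactly $1 - Lh$.

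Next I would invoke Theorem~\ref{thm:one-to-multi} to obtain, for any Dirac initialization,
\begin{align*}
    \Ren_q(\delta_x \hat{P}_h^N \ast \delta_v \mmid \delta_x \hat{P}_h^N)
    \le \frac{1-r^2}{1-r^{2N}}\cdot \frac{q\,\norm v^2}{2\lambda h}\,.
\end{align*}
Finally, I would apply the convexity principle (Lemma~\ref{lem:cvxty}) with the function $\rho(v) = \tfrac{q\,(1-r^2)}{2\lambda h\,(1-r^{2N})}\,\norm v^2$ (for $q>1$) and $\rho(v) = \tfrac{(1-r^2)}{2\lambda h\,(1-r^{2N})}\,\norm v^2$ (for $q=1$) to upgrade from Dirac to arbitrary initializations $\mu$ and arbitrary convolutions $\nu,\nu'$; the KL case collapses the infimum over couplings into a Wasserstein distance, while the $\Ren_q$ case produces exactly the exponential-integral form in the statement.

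I do not anticipate any genuine obstacle: the hardest step is simply recognizing that the one-step bound is sharp (and sharpness is precisely what is captured by the Gaussian identity, with no room for slack beyond the Lipschitz/ellipticity estimates used). The only subtlety is bookkeeping the two regimes $q=1$ versus $q>1$, so that the factor of $q$ appearing in $c_2^2$ propagates correctly through the convexity principle; this just requires matching the $(q-1)\,\rho$ inside the exponential of Lemma~\ref{lem:cvxty} with $q\,(q-1)$ in the final bound.
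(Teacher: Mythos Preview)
Your proposal is correct and follows essentially the same approach as the paper's proof: verify the one-step bound~\eqref{eq-thm1:one-step} via the Gaussian identity plus Lipschitz/ellipticity, apply Theorem~\ref{thm:one-to-multi} to obtain the Dirac-initialized multi-step bound, and then upgrade via the convexity principle (Lemma~\ref{lem:cvxty}). The constants and the handling of the $q=1$ versus $q>1$ cases match exactly.
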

\begin{proof}
    We show the claim for R\'enyi divergence; the claim for KL divergence follows by an identical argument or by taking the limit $q \searrow 1$. 
    \par First, note that $\hat{P}_h$ satisfies the one-step regularity~\eqref{eq-thm1:one-step} with $c_1 = \tfrac{L \sqrt{qh}}{\sqrt{2\lambda}}$ and $c_2 = \tfrac{\sqrt{q}}{\sqrt{2h\lambda}}$ since
    \begin{align*}
        \Ren_q( \delta_x \hat{P}_h \ast \delta_v \mmid \delta_{x+w} \hat{P}_h) 
        &=
        \Ren_q \bigl( \cN(x + h\,b(x) + v, h\,\sigma \sigma^\T) \bigm\Vert \cN(x + w + h\,b(x+w), h\,\sigma \sigma^\T) \bigr) 
        \\ &\le \frac{q}{2\lambda h}\, \|h\,b(x) - h\,b(x+w) + v-w\|^2
        \\ &\leq \frac{q}{2\lambda h}\, (Lh\, \|w\| + \|v-w\|)^2\,.
    \end{align*}
    Above, the first step is by definition of $\hat{P}_h$. The second step is by the identity for the R\'enyi divergence between Gaussians (Theorem~\ref{thm:renyi_prop}) and the uniform ellipticity $\sigma \sigma^\T \succeq \lambda I$. The third step is by using the triangle inequality and $L$-Lipschitzness of the drift $b$. 
    \par By Theorem~\ref{thm:one-to-multi}, this one-step regularity implies the multi-step regularity
    \begin{align}
        \Ren_q( \delta_x \hat{P}_h^N \ast \delta_{v} \mmid \delta_x \hat{P}_h^N) \leq \frac{1 - {(1 - c_1/c_2)}^2}{1 - {(1 - c_1/c_2)}^{2N}}\, c_2^2\, \|v\|^2 
        = \frac{q\,(1 - r^2)}{2\lambda h\,(1 - r^{2N})}\, \|v\|^2\,.\label{eq:disc-dirac}
    \end{align}
    The statements of the theorem now follow from the convexity principle (Lemma~\ref{lem:cvxty}).
\end{proof}

This tight forward regularity result for the discretized semigroup immediately implies the following tight regularity result for the (standard, continuous-time) semigroup by taking the limit as the step size $h \searrow 0$ and the total elapsed continuous time is fixed to $T = Nh$. 

\begin{cor}[Forward regularity for It\^o diffusions]\label{cor:reg-cont}
    For any time $T$, any initialization measure $\mu$, and any convolution measures $\nu,\nu'$,
     \begin{align}
        \KL( \mu P_T \ast \nu \mmid \mu P_T \ast \nu')
        \leq
        \frac{L}{\lambda \,(1 - \exp(-2L T))}\, W_2^2(\nu,\nu')\,,
    \end{align}
    and for any $q \ge 1$,
    \begin{align}
        \Ren_q( \mu P_T \ast \nu \mmid \mu P_T \ast \nu')
        \leq
        \inf_{\gamma \in \Coup(\nu,\nu')} \frac{1}{q-1} \log \int \exp\Bigl( \frac{L q\,(q-1)\, \|v-v'\|^2}{\lambda\,(1 - \exp(-2L T))} \Bigr)\, \gamma(\D v, \D v')\,.
    \end{align}
\end{cor}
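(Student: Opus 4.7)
The plan is to derive this continuous-time bound as a limiting case of the discrete-time bound in Theorem~\ref{thm:reg-disc}, by sending the step size $h \searrow 0$ with $T = Nh$ held fixed, exactly as the statement anticipates.

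First, I would verify that the discrete constant converges to the claimed continuous one. Setting $h = T/N$ and $r = 1 - Lh$, the elementary identity $N(1-r^2) = 2LT - L^2T^2/N$ combined with the limit $(1 - LT/N)^{2N} \nearrow \exp(-2LT)$ yields
\begin{align*}
    \frac{q\,(1-r^2)}{2\lambda h\,(1-r^{2N})}
    = \frac{q\,N\,(1-r^2)}{2\lambda T\,(1-r^{2N})}
    \;\xrightarrow[N\to\infty]{}\;  \frac{Lq}{\lambda\,(1-\exp(-2LT))}\,,
\end{align*}
and a short convexity check (differentiating $N \mapsto 2N \log(1-LT/N)$) shows this convergence is monotone from below. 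In particular, the integrand appearing in the R\'enyi bound of Theorem~\ref{thm:reg-disc} is pointwise monotone increasing in $N$.

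Second, I would invoke standard strong approximation for the Euler--Maruyama scheme: since $b$ is $L$-Lipschitz and $\sigma$ is constant, one has $W_2(\mu \hat{P}_h^N, \mu P_T) \to 0$ as $N\to\infty$ for every probability measure $\mu$, hence $\mu \hat{P}_h^N \ast \nu \rightharpoonup \mu P_T \ast \nu$ (and similarly with $\nu'$), by the weak continuity of convolution. Then I would pass the bound of Theorem~\ref{thm:reg-disc} to the limit using the joint weak lower semicontinuity of $\Ren_q$ (with KL as the $q=1$ case). Concretely, for each fixed coupling $\gamma \in \Coup(\nu, \nu')$,
\begin{align*}
    \Ren_q(\mu P_T \ast \nu \mmid \mu P_T \ast \nu')
    &\le \liminf_{N\to\infty} \Ren_q(\mu \hat{P}_h^N \ast \nu \mmid \mu \hat{P}_h^N \ast \nu') \\
    &\le \liminf_{N\to\infty} \frac{1}{q-1}\log\int \exp\Bigl(\frac{q\,(q-1)\,(1-r^2)}{2\lambda h\,(1-r^{2N})}\,\|v-v'\|^2\Bigr)\gamma(\D v, \D v')\,.
\end{align*}
Monotone convergence (via Step~1) then identifies the right-hand side with the desired expression, and minimizing over $\gamma$ yields the R\'enyi bound. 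The KL case follows by the same argument applied directly to the linear-in-$W_2^2$ discrete inequality (or equivalently by letting $q \searrow 1$).

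The main obstacle is not computational but rather the careful invocation of two standard facts in the right generality: the $W_2$-convergence of the Euler--Maruyama iterates without any extra regularity on $\mu, \nu, \nu'$, and the weak lower semicontinuity of $\Ren_q$ and $\KL$. Both are textbook results but must be cited cleanly; once they are in hand, the discrete bound is already sharp and no further work is needed.
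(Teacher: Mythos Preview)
Your proposal is correct and follows exactly the approach the paper takes: the paper simply states that the corollary follows from Theorem~\ref{thm:reg-disc} by sending $h\searrow 0$ with $T=Nh$ fixed, without supplying any of the details you have filled in (the limit of the constant, monotonicity, Euler--Maruyama $W_2$-convergence, or lower semicontinuity of $\Ren_q$). Your write-up is thus a faithful elaboration of the one-line proof the paper gives.
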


\begin{remark}[Refined convexity principle]
    The results in this section are proved for arbitrary initialization measures $\mu$ and arbitrary convolution measures $\nu,\nu'$ by using the convexity principle (Lemma~\ref{lem:cvxty}) to ``upgrade'' the forward regularity bound~\eqref{eq:disc-dirac} from the setting where $\mu,\nu,\nu'$ are all Dirac. As discussed in Remark~\ref{rem:convexity-refinements}, it is straightforward to instead use the refined version of the convexity principle from~\cite[\S3.3]{scr1} to obtain tighter bounds for specific settings of $\mu,\nu,\nu'$; and/or use a coupled version of the convexity principle to incorporate dependencies between $\mu,\nu,\nu'$.
\end{remark}

\subsection{Dual shifted divergences}\label{ssec:discrete:dual-sd}

Shifted divergences---an information-theoretic notion originating from the differential privacy literature~\cite{pabi}---are intimately connected to backward regularity, as pointed out in Part I~\cite[\S3.5]{scr1}. Here we briefly recall this connection in order to point out a parallel connection between between forward regularity and a new ``dual'' version of shifted divergences.

\paragraph*{Shifted divergences and backward regularity.} We begin by recalling the definition of the shifted divergence between distributions $\mu$ and $\nu$:
\begin{align}
        \mathcal{D}^{(z)}(\mu \mmid \nu)  \deq  \inf_{\mu' \in \mathcal{P}(\R^d) \; : \; W_{\infty}(\mu,\mu') \leq z} \mathcal{D}(\mu' \mmid \nu)\,.\label{eq:sd-def}
\end{align}
In words, the shift $z \geq 0$ enables changing one argument of the divergence $\cD$ (typically KL or R\'enyi) in Wasserstein distance. Shifted divergences are intimately related to backward regularity since it can be shown that
\begin{align}
    \mathcal{D}(\delta_x P \mmid \delta_y P) \leq \mathcal{D}^{(z)}(\delta_x \mmid \delta_y) + c_P z^2\,,
    \label{eq:sd-ineq1}
\end{align}
for Markov semigroups $P$ such as the Langevin kernel, in both discrete-time~\cite{AltTal23Langevin} and continuous-time settings~\cite{scr1}. Indeed, choosing $z = \|x-y\|^2$ yields the inequality
\begin{align}
    \mathcal{D}(\delta_x P \mmid \delta_y P) \leq c_P\, \|x-y\|^2\,,
    \label{eq:sd-ineq2}
\end{align}
which is one of the many equivalent statements of backward regularity for $P$---the other equivalences including reverse transport inequalities (equivalent by the convexity principle), Harnack inequalities (equivalent by H\"older duality), curvature-dimension inequalities (equivalent by semigroup methods), etc. We refer the reader to Part I~\cite{scr1} for further discussion.

\paragraph*{Dual shifted divergences and forward regularity.} In analogy, we now define a ``dual'' notion of shifted divergences, with negative shift $-z \leq 0$:
\begin{align}
    \mathcal{D}^{(-z)}(\mu \mmid \nu)
         \deq 
        \sup_{\rho \in \mathcal{P}(\R^d) \; : \; W_\infty(\rho,\delta_0) \leq z} \mathcal{D}(\mu \ast \rho \mmid \nu)
        =
        \sup_{v \in \R^d \; : \; \|v\| \leq z} \mathcal{D}(\mu \ast \delta_{v} \mmid \nu)
        \,.
    \label{eq:dualsd-def}
\end{align}
We pause to give a few remarks about this definition. First, now that the shift is negative, the infimum becomes a supremum. Second,~\eqref{eq:dualsd-def} gives two expressions for this definition---these are equivalent by quasi-convexity of the R\'enyi divergence and the fact that a convolution with an arbitrary distribution can be equivalently viewed as a mixture over convolutions with Diracs. 
Third, rather than ranging $\mu'$ over a Wasserstein ball around $\mu$, here $\mu'$ is further restricted to be of the form $\mu \ast \rho$ where $\rho$ is supported on a small ball. This is necessary for the dual shifted divergence to be meaningful (e.g., else one can find $\mu' \not \ll \nu$, rendering the dual shifted divergence infinite). 

\par The key point of this discussion is that, similarly to the interpretation~\eqref{eq:sd-ineq2} of backward regularity in terms of the standard shifted divergence, the forward regularity we proved in \S\ref{ssec:disc:ito} (see~\eqref{eq:disc-dirac} or Theorem~\ref{thm:reg-disc}) can be interpreted as follows in terms of the dual shifted divergence:
\begin{align}
    \mathcal{D}^{(-z)}(\delta_x P \mmid \delta_x P) \leq c_P z^2\,.
    \label{eq:sd-dual-ineq}
\end{align}

\paragraph*{A unified family of shifted divergences.} Our notation for dual shifted divergences intentionally unifies this notion with the standard version of shifted divergences. Now the standard shifted divergences are extended by taking an arbitrary $z \in \R$, not just $z \geq 0$. This unification applies not just to the definition, but also to some key properties. In particular, the analysis of shifted divergences relies on two lemmas: the convolution lemma~\cite[Lemma 20]{pabi} and the contraction lemma~\cite[Lemma 21]{pabi}. Although the latter does not appear to extend to dual shifted divergences---necessitating our analysis for forward regularity in~\S\ref{ssec:disc:ito} based on the shifted composition rule---the former does. 
\par We state this lemma below in a unified way that applies to both the original and dual versions of shifted divergences. Below, let $S_q(\xi,a)  \deq  \sup_{v\,:\,\|v\| \leq a} \Ren_q(\xi \ast \delta_v \mmid \xi)$ denote the R\'enyi sensitivity of a distribution $\xi$ with respect to a translation of size at most $a$. The proof is given in \S\ref{app:dual-sd}.

\begin{lemma}[Generalized convolution lemma for shifted divergences]\label{lem:sd-dual:convolution}
    For any initial shift $z \in \R$, shift increase $a \geq 0$, R\'enyi parameter $q \geq 1$, and distributions $\mu,\nu,\xi$,
    \[
        \Ren_q^{(z)}\left( \mu \ast \xi \mmid \nu \ast \xi \right)
        \leq
         \Ren_q^{(z+a)}(\mu \mmid \nu) + 
          S_q(\xi, a)\,.      
    \]
\end{lemma}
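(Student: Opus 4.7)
The plan is to apply the shifted composition rule (Theorem~\ref{thm:scr}) via a case analysis on the signs of $z$ and $z+a$, so as to uniformly handle the two different definitions~\eqref{eq:sd-def} and~\eqref{eq:dualsd-def} of the (dual) shifted divergence. In each case, the shifted composition rule will cleanly split the R\'enyi divergence into two pieces: a $\Ren_q^{(z+a)}(\mu \mmid \nu)$ contribution arising from the $X'$ marginal, and a conditional R\'enyi divergence between translates of $\xi$ controlled by $S_q(\xi,a)$.

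Consider first the representative case $z \ge 0$ (so also $z+a \ge 0$). Fix any $\mu'$ with $W_\infty(\mu,\mu') \le z+a$, realized by a deterministic transport map $T$ with $\|x-T(x)\| \le z+a$ for $\mu$-a.e.\ $x$. For each $x$, decompose $x - T(x) = U(x) + V(x)$ where $V(x)$ is the projection of $x - T(x)$ onto the ball of radius $z$, so $\|V(x)\| \le z$ and $\|U(x)\| \le a$. Let $Y_0 \sim \xi$ be independent of $X \sim \mu$, and set $\mu'' := \text{law of } T(X) + U(X) + Y_0 = X - V(X) + Y_0$. The coupling $(X+Y_0,\, X-V(X)+Y_0)$ witnesses $W_\infty(\mu \ast \xi,\mu'') \le z$, so $\Ren_q^{(z)}(\mu \ast \xi \mmid \nu \ast \xi) \le \Ren_q(\mu'' \mmid \nu \ast \xi)$. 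To bound the latter, apply Theorem~\ref{thm:scr} with $\bs\mu$ the joint law of $(X,\, X' = T(X),\, Y = X' + U(X) + Y_0)$ and $\bs\nu$ defined by $X \sim \nu$ and $Y = X + Z$ for $Z \sim \xi$ independent, so that $\bs\mu^{X'} = \mu'$, $\bs\nu^X = \nu$, $\bs\mu^Y = \mu''$, and $\bs\nu^Y = \nu \ast \xi$. Choosing the coupling $\gamma'$ in the infimum to be the graph of $T$, each pair $(x, T(x))$ gives conditional laws $\bs\mu^{Y \mid X = x} = \delta_{T(x) + U(x)} \ast \xi$ and $\bs\nu^{Y \mid X = T(x)} = \delta_{T(x)} \ast \xi$, whose R\'enyi divergence equals $\Ren_q(\xi \ast \delta_{U(x)} \mmid \xi) \le S_q(\xi,a)$ since $\|U(x)\| \le a$. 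Combining and then infimizing over $\mu'$ yields the claim.

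The two remaining cases follow the same template. If $z < 0 \le z + a$, fix $v$ with $\|v\| \le -z$ and observe that $U(X) := (X - T(X)) + v$ still has norm at most $(z+a) + (-z) = a$; since $T(X) + U(X) + Y_0$ has law $\mu \ast \delta_v \ast \xi$, the shifted composition rule argument above proceeds unchanged, and taking $\sup$ over $v$ followed by $\inf$ over $\mu'$ recovers both sides. If $z + a < 0$, there is no $\mu' \neq \mu$ available, so fix $v$ with $\|v\| \le -z$ and split $v = v' + u$ with $\|v'\| \le -z - a$ and $\|u\| \le a$; applying Theorem~\ref{thm:scr} with the diagonal coupling $X = X' \sim \mu \ast \delta_{v'}$ and $Y = X + u + Z$ produces $\Ren_q(\mu \ast \delta_v \ast \xi \mmid \nu \ast \xi) \le \Ren_q(\mu \ast \delta_{v'} \mmid \nu) + S_q(\xi,a)$, after which the $\sup$ over $v$ (equivalently, over $v'$ with $\|v'\| \le -z-a$) closes the case.

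The main technical obstacle is the existence of a deterministic transport map $T$ attaining $W_\infty(\mu, \mu')$, which can fail for degenerate $\mu$ (e.g., finitely supported). A short approximation argument resolves this: convolve $\mu$ with a Gaussian of vanishing variance, for which an optimal transport map exists, prove the inequality in the regularized regime, and pass to the limit using the lower-semicontinuity of $\Ren_q$ in its first argument together with the continuity of convolution and $W_\infty$ under weak convergence. Alternatively, one may work directly with a general coupling and bound the conditional R\'enyi divergence $\Ren_q(\bs\mu^{Y \mid X = x} \mmid \delta_{T(x)} \ast \xi)$ via the joint convexity of $\Hell_q$; the approximation route appears cleaner and adds only a few lines.
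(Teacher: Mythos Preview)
Your case analysis and use of the shifted composition rule match the paper's approach: the paper cites~\cite[Lemma 20]{pabi} for $z\ge 0$ and then splits $z<0$ into the two sub-cases corresponding exactly to your cases~3 and~2.

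The substantive difference is that the paper never relies on a deterministic $W_\infty$-optimal map. In its case $a>|z|$ (your case~2), it sets $Y=X+Z$ with $Z\sim\xi$ independent of the coupled pair $(X,X')$, so that the conditional $\bs\mu^{Y\mid X=x}=\delta_x\ast\xi$ depends only on $x$; the esssup term in Theorem~\ref{thm:scr} then becomes $\sup_{(x,x')\in\supp\gamma}\Ren_q(\delta_x\ast\xi\mmid\delta_{x'}\ast\xi)\le S_q(\xi,W_\infty(\mu\ast\delta_{v},\mu'))$, valid for any coupling $\gamma$. The same trick handles your case~1 without a map: given any $W_\infty$-coupling $(X,X')\sim\gamma$, compute your decomposition $X-X'=U+V$ as a measurable function of the \emph{pair} $(X,X')$, set $X''\deq X'+U$ and $Y\deq X''+Z$, and apply Theorem~\ref{thm:scr} with $X''$ in the role of ``$X$''. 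Then $\bs\mu^{Y\mid X''=x''}=\delta_{x''}\ast\xi$, the natural coupling $(X'',X')$ has $\|X''-X'\|=\|U\|\le a$, and $W_\infty(\mu\ast\xi,\law(X''+Z))\le z$ via $(X+Z,X''+Z)$.

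Your proposed approximation route has a genuine gap. Convolving $\mu$ with a Gaussian gives $W_\infty(\mu_\epsilon,\mu)=\infty$, so the feasible sets defining $\Ren_q^{(z+a)}(\mu_\epsilon\mmid\nu)$ and $\Ren_q^{(z+a)}(\mu\mmid\nu)$ are not comparable; even with a compactly supported mollifier, passing to the limit would require continuity of $z\mapsto\Ren_q^{(z)}(\cdot\mmid\cdot)$, which you have not established and which need not hold. Your alternative via joint convexity of $\Hell_q$ also fails as written: with a non-deterministic coupling, $\bs\mu^{Y\mid X=x}$ is a mixture over $x''\sim\gamma(\cdot\mid x)$ of translates of $\xi$ by $x''+U(x,x'')-x'$, and this displacement need not have norm $\le a$ when $x''\ne x'$. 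Both issues disappear once you adopt the paper's device of making $Y$ a function of the auxiliary variable alone.
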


\section{Continuous-time arguments}\label{sec:cont}

In this section, we give the corresponding continuous-time coupling arguments for establishing forward regularity.
Since it does not particularly complicate the notation nor the proof, we work with a slightly more general setting than the one described at the end of \S\ref{sec:intro}, i.e., we consider It\^o SDEs with time-varying coefficients
\begin{align}\label{eq:ito_diffusion}
    \D X_t
    &= b_t(X_t) \, \D t + \sigma_t\,\D B_t\,, \qquad X_0 = x\,.
\end{align}
We assume that $b_t$ is $L$-Lipschitz and that $\sigma_t \sigma_t^\T \succeq \lambda I$ for all $t\ge 0$, and for simplicity we take $\sigma_t \in \R^{d\times d}$ to be an invertible matrix for each $t\ge 0$. We also assume, of course, that~\eqref{eq:ito_diffusion} admits a unique strong solution, which is guaranteed if the coefficients are also Lipschitz in time.
Throughout, we let $\mu_t \deq \law(X_t)$ denote the marginal law of the process.

We define the auxiliary process $Y_t \deq X_t + a_t v$ for $t \in [0,T]$, where $\{a_t\}_{t\in [0,T]}$ is increasing and chosen so that $a_0 = 0$ and $a_T = 1$.
This ensures that $Y_0 = x$ and $Y_T = X_T + v$.
By It\^o's formula,
\begin{align}\label{eq:cont_time_aux}
    \D Y_t
    &= \D X_t + \dot a_t v \, \D t
    = (b_t(Y_t - a_t v) + \dot a_t v) \, \D t + \sigma_t \, \D B_t\,.
\end{align}
This auxiliary process $\{Y_t\}_{t\in [0,T]}$ is the analogue, in the continuous-time limit $h \searrow 0$, of the auxiliary process used in the proof of Theorem~\ref{thm:one-to-multi}.

\par Our goal is to bound the R\'enyi divergence $\Ren_q(\mu_T * \delta_v \mmid \mu_T) = \Ren_q(\law(Y_T) \mmid \law(X_T))$. In discrete time, this was achieved using the shifted composition rule. Here we provide two continuous-time analogues. One is based on Girsanov's theorem and stochastic calculus arguments (\S\ref{ssec:cont:synchronous}), and the other is based on direct differentiation of the quantity $\Ren_q(\law(Y_t) \mmid \law(X_t))$ with respect to $t$ (\S\ref{ssec:cont:wasserstein}).

\begin{remark}
    For backward regularity,~\cite[\S4]{scr1} provides two continuous-time proofs, based on two different constructions of the auxiliary process: the ``synchronous coupling'' and ``Wasserstein coupling''. For forward regularity, the auxiliary process is simply a deterministic shift of the original process, so the coupling is trivial and these two couplings coincide. Nevertherless, the two couplings in~\cite{scr1} lead to two distinct proof techniques which are reflected in the following subsections.
\end{remark}

\subsection{Proof via Girsanov's theorem}\label{ssec:cont:synchronous}

In the first approach, we control the quantity $\Ren_q(\law(Y_T) \mmid \law(X_T))$ via Girsanov's theorem~\cite[Theorem 5.22]{legall2016stochasticcalc}, since the process $\{Y_t\}_{t\in [0,T]}$ can be realized as a Girsanov transformation of $\{X_t\}_{t\in [0,T]}$.
This approach was introduced by F.-Y.\ Wang in~\cite{Wang14ShiftHarnack} and has been successfully applied to a multitude of settings, see \S\ref{sec:intro} for references.
Crucially, we show how to apply his method to obtain sharp constants, which were not previously known.

\paragraph*{KL divergence bound.}
We begin with the KL divergence ($q=1$), since the argument is simpler in this case.
By Girsanov's theorem, if $\bs\mu_T'$ and $\bs\mu_T$ denote the path measures corresponding to $\{Y_t\}_{t\in [0,T]}$ and $\{X_t\}_{t\in [0,T]}$ respectively,
\begin{align}
    \KL(\mu_T * \delta_v \mmid \mu_T)
    &\le \KL(\bs\mu_T' \mmid \bs\mu_T)
    = \frac{1}{2}\, \E\int_0^T \norm{\sigma_t^{-1}\,(b_t(X_t)  - b_t(X_t + a_t v) + \dot a_t v)}^2 \, \D t\,.
    \label{eq:fwd-reg:cont:sd}
\end{align}
Using the uniform ellipticity and the Lipschitzness of the drift, this is bounded by
\begin{align*}
    \frac{\norm v^2}{2\lambda} \int_0^T {(La_t + \dot a_t)}^2\,\D t\,.
\end{align*}
To minimize the integrand, we use calculus of variations.
Consider the functional
\begin{align*}
    \ms F(a)
    &\deq \int_0^T {(La_t + \dot a_t)}^2 \, \D t\,.
\end{align*}
The first variation of $\ms F$ is computed to be
\begin{align*}
    \delta \ms F(a)(t)
    &= 2\,(L^2 a_t - \ddot a_t)\,.
\end{align*}
We set the first variation to zero.
With the boundary conditions $a_0 = 0$, $a_T = 1$, this is solved with
\begin{align}\label{eq:cont_opt_shift}
    a_t
    &= \frac{\exp(Lt) - \exp(-Lt)}{\exp(LT) - \exp(-LT)} = \frac{\sinh(Lt)}{\sinh(LT)}\,.
\end{align}
Substituting this in, we obtain the bound
\begin{align*}
    \KL(\mu_T * \delta_v \mmid \mu_T)
    &\le \frac{2L^2\,\norm v^2}{\lambda\,{(\exp(LT) - \exp(-LT))}^2} \int_0^T \exp(2Lt) \, \D t
    = \frac{L\,\norm v^2}{\lambda\,(1- \exp(-2LT))}\,.
\end{align*}

\paragraph*{R\'enyi divergence bound.}
We now consider the R\'enyi divergence of order $q > 1$.
We define the $\bs\mu_T$-martingale $M$ via $M_t \deq \int \langle \sigma_t^{-1}\,(b(X_t - a_t v) - b_t(X_t) + \dot a_t v), \D B_t\rangle$, where $\{B_t\}_{t\in [0,T]}$ is a standard Brownian motion under $\bs\mu_T$.
By Girsanov's theorem,
\begin{align*}
    \Ren_q(\mu_T * \delta_v \mmid \mu_T)
    &\le \Ren_q(\bs \mu_T' \mmid \bs \mu_T)
    = \frac{1}{q-1} \log \E_{\bs \mu_T} \exp\bigl(qM_T - \frac{q}{2} \,{[M,M]}_T\bigr)\,.
\end{align*}
Recall that by It\^o's formula, for any martingale $\widetilde M$, the exponential martingale $\exp(\widetilde M - \frac{1}{2}\,[\widetilde M, \widetilde M])$ is a non-negative local martingale and hence a supermartingale.
Applying this to $\widetilde M \deq qM$, we have
\begin{align*}
    \E_{\bs \mu_T} \exp\bigl(qM_T - \frac{q}{2} \,{[M,M]}_T\bigr)
    &\le \bigl\lVert \exp\bigl(\frac{q\,(q-1)}{2}\,{[M, M]}_T\bigr)\bigr\rVert_{L^\infty(\bs\mu_T)}\,\underbrace{\E_{\bs\mu_T} \exp\bigl(qM_T - \frac{q^2}{2}\,{[M,M]}_T\bigr)}_{\le 1}\,.
\end{align*}
On the other hand,
\begin{align*}
    \bigl\lVert {[M, M]}_T\bigr\rVert_{L^\infty(\bs\mu_T)}  = \Bigl\lVert \int_0^T \norm{\sigma_t^{-1}\,(b(X_t - a_t v) - b_t(X_t) + \dot a_t v)}^2 \, \D t)\Bigr\rVert_{L^\infty(\bs\mu_T)} 
     \le \frac{\norm v^2}{\lambda}\int_0^T {(La_t + \dot a_t)}^2 \, \D t\,.
\end{align*}
Substituting in the same choice of $\{a_t\}_{t\in [0,T]}$ as in~\eqref{eq:cont_opt_shift}, and simplifying, we obtain
\begin{align*}
    \Ren_q(\mu_T * \delta_v \mmid \mu_T)
    &\le \frac{qL\,\norm v^2}{\lambda\,(1-\exp(-2LT))}\,.
\end{align*}


\subsection{Proof via divergence differentiation}\label{ssec:cont:wasserstein}

We now provide an argument based on differentiation along the Fokker{--}Planck equation which, to the best of our knowledge, is new.
In the following proof, the case of the KL divergence ($q=1$) is not substantially simpler than the case of general R\'enyi divergences ($q > 1$), so we provide the proof of the latter to avoid repetition.

The proof is based on the following observation.
Denoting $\mu_t' \deq \law(Y_t) \deq \law(X_t + a_t v)$, one can easily check that
\begin{align}\label{eq:fokker_planck}
    \begin{aligned}
        \partial_t \mu_t
        &= \frac{1}{2}\,\langle \sigma \sigma^\T, \nabla^2 \mu_t\rangle-\divergence(\mu_t\, b_t)\,, \\[0.25em]
        \partial_t \mu_t'
        &=\frac{1}{2}\,\langle \sigma\sigma^\T, \nabla^2 \mu_t'\rangle -\divergence(\mu_t' \,(b_t \circ \tau_{-a_t v} + \dot a_t v))\,,
    \end{aligned}
\end{align}
where for $u\in\R^d$, $\tau_u : \R^d\to\R^d$ denotes the translation $x\mapsto x + u$.
Indeed, $\{X_t\}_{t\in [0,T]}$ solves the SDE~\eqref{eq:ito_diffusion} and $\{Y_t\}_{t\in [0,T]}$ solves the SDE~\eqref{eq:cont_time_aux}, so the marginal laws of these processes solve the respective Fokker{--}Planck equations.

With~\eqref{eq:fokker_planck} in hand, we are now in a position to apply a lemma on differentiation of divergences along diffusions which has been utilized in prior works such as~\cite{VempalaW19, chenetal2022proximalsampler}.
We state a version of this lemma which generalizes~\cite[Lemma A.5]{scr1}; its proof follows exactly along the same lines.

\begin{lemma}
    Let $\psi : \R_+\to\R_+$ be twice continuously differentiable on $(0,\infty)$.
    Consider the associated divergence
    \begin{align*}
        \msf D_\psi(\mu \mmid \nu)
        &\deq \int \psi\bigl( \frac{\D\mu}{\D\nu}\bigr)\,\D\nu\,.
    \end{align*}
    Suppose that ${(\mu_t)}_{t\ge 0}$, ${(\nu_t)}_{t\ge 0}$ are positive and smooth densities evolving according to the equations
    \begin{align*}
        \partial_t \mu_t
        &= \frac{1}{2}\,\langle \sigma_t \sigma_t^\T, \nabla^2 \mu_t\rangle + \divergence(\mu_t a_t)\,, \\[0.25em]
        \partial_t \nu_t
        &= \frac{1}{2}\,\langle \sigma_t \sigma_t^\T, \nabla^2 \nu_t\rangle + \divergence(\nu_t b_t)\,.
    \end{align*}
    Here, ${(a_t)}_{t\ge 0}$ and ${(b_t)}_{t\ge 0}$ are families of vector fields on $\R^d$, and for each $t \ge 0$, $\sigma_t \in \R^{d\times d}$ is an invertible matrix.
    Then, it holds that
    \begin{align*}
        \partial_t \msf D_\psi(\mu_t \mmid \nu_t)
        &= -\E_{\mu_t}\bigl\langle \nabla \bigl(\psi' \circ \frac{\mu_t}{\nu_t}\bigr),\; \frac{1}{2}\,\sigma_t \sigma_t^\T \,\nabla \log \frac{\mu_t}{\nu_t} + a_t - b_t\bigr\rangle\,.
    \end{align*}
\end{lemma}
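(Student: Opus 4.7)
The plan is to treat this as a standard entropy-dissipation computation: differentiate under the integral, substitute the two Fokker{--}Planck equations, integrate by parts, and collect terms. Let $r_t \deq \mu_t/\nu_t$ and start from
\begin{align*}
    \partial_t \msf D_\psi(\mu_t \mmid \nu_t)
    &= \partial_t \int \psi(r_t)\,\nu_t \,\D x
    = \int \psi'(r_t)\,\partial_t \mu_t \,\D x + \int \bigl[\psi(r_t) - r_t\,\psi'(r_t)\bigr]\,\partial_t \nu_t \, \D x\,,
\end{align*}
which follows from $\partial_t r_t = (\partial_t \mu_t)/\nu_t - r_t\,(\partial_t \nu_t)/\nu_t$ and regrouping.

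Next, I would substitute the two Fokker{--}Planck equations. Since $\sigma_t \sigma_t^\T$ is constant in the spatial variable, one has $\langle \sigma_t \sigma_t^\T, \nabla^2 \mu_t\rangle = \divergence(\sigma_t \sigma_t^\T \nabla \mu_t)$ (and likewise for $\nu_t$), so both PDEs have the pure-divergence form $\partial_t \mu_t = \divergence(\frac12 \sigma_t \sigma_t^\T \nabla \mu_t + \mu_t a_t)$ and $\partial_t \nu_t = \divergence(\frac12 \sigma_t \sigma_t^\T \nabla \nu_t + \nu_t b_t)$. Integration by parts then yields
\begin{align*}
    \int \psi'(r_t)\,\partial_t \mu_t\,\D x
    &= -\int \bigl\langle \nabla \psi'(r_t),\; \tfrac12 \sigma_t \sigma_t^\T \nabla \mu_t + \mu_t a_t\bigr\rangle \,\D x\,,
\end{align*}
and likewise for the $\nu_t$ term, using the identity $\nabla[\psi(r) - r\psi'(r)] = -r\,\psi''(r)\,\nabla r$ to compute the gradient of the multiplier.

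After these two integration-by-parts, I would regroup into a ``second-order'' piece (both contributions involving $\sigma_t \sigma_t^\T$) and a ``drift'' piece (both contributions involving $a_t, b_t$). For the second-order piece, the key algebraic identity is
\begin{align*}
    \nabla \mu_t - r_t\,\nabla \nu_t
    &= \nu_t\,\nabla r_t\,,
\end{align*}
which immediately collapses the two terms into $-\tfrac12 \int \nu_t\,\psi''(r_t)\,\langle \nabla r_t, \sigma_t \sigma_t^\T \nabla r_t\rangle\,\D x$. Multiplying and dividing by $r_t$ and using $\nabla \log r_t = \nabla r_t/r_t$ together with $\nabla \psi'(r_t) = \psi''(r_t)\,\nabla r_t$ rewrites this as $-\tfrac12 \E_{\mu_t}\langle \nabla \psi'(r_t), \sigma_t \sigma_t^\T \nabla \log r_t\rangle$. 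For the drift piece, the analogous identity is $r_t\,\psi''(r_t)\,\langle \nabla r_t, \nu_t b_t\rangle = \mu_t\,\langle \nabla \psi'(r_t), b_t\rangle$, yielding $-\E_{\mu_t}\langle \nabla \psi'(r_t), a_t - b_t\rangle$. Summing both pieces gives exactly the claimed formula.

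\textbf{Main obstacle.} The computation itself is only bookkeeping; the only real points of care are (i) justifying the two integrations by parts (i.e., that boundary terms vanish at infinity), and (ii) justifying interchange of $\partial_t$ with the spatial integral. Both are handled by the hypothesis that $\mu_t, \nu_t$ are positive, smooth densities, together with mild decay, which is exactly the setting in which the analogous Lemma~A.5 of \cite{scr1} is proved; since this generalization only changes the drifts and keeps the diffusion coefficient constant in space, the same justification transfers verbatim.
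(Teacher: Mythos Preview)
Your proposal is correct and is exactly the standard entropy-dissipation computation the paper has in mind: the paper does not write out a proof but defers to \cite[Lemma~A.5]{scr1}, stating that ``its proof follows exactly along the same lines,'' and your argument (differentiate under the integral, put both PDEs in divergence form, integrate by parts, collapse via $\nabla\mu_t - r_t\,\nabla\nu_t = \nu_t\,\nabla r_t$) is precisely that computation.
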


We apply this with $\psi(\cdot) = {(\cdot)}^q - 1$, for which $\msf D_\psi = \msf D_q$.
The simultaneous flow lemma yields
\begin{align*}
    \partial_t \msf D_q(\mu_t' \mmid \mu_t)
    &= -q\,\E_{\mu_t'}\Bigl\langle \nabla \bigl[ \bigl(\frac{\mu_t'}{\mu_t}\bigr)^{q-1}],\, \frac{1}{2}\,\sigma_t \sigma_t^\T\, \nabla \log\frac{\mu_t'}{\mu_t} + b_t - b_t \circ \tau_{-a_t v} - \dot a_t v\Bigr\rangle\,.
\end{align*}
By the chain rule,
\begin{align*}
    \E_{\mu_t'}\Bigl\langle \nabla \bigl[ \bigl(\frac{\mu_t'}{\mu_t}\bigr)^{q-1}], \,\sigma_t \sigma_t^\T\, \nabla \log\frac{\mu_t'}{\mu_t}\Bigr\rangle
    &= (q-1)\, \E_{\mu_t'}\Bigl[\bigl(\frac{\mu_t'}{\mu_t}\bigr)^{q-1} \,\bigl\lVert \nabla \log \frac{\mu_t'}{\mu_t}\bigr\rVert^2_{\sigma_t \sigma_t^\T}\Bigr]\,,
\end{align*}
where, for a positive definite matrix $\Sigma \succ 0$, we write $\norm x_\Sigma^2 \deq \langle x, \Sigma\,x\rangle$.
Also, by Young's inequality,
\begin{align*}
    &-\E_{\mu_t'}\Bigl\langle \nabla \bigl[ \bigl(\frac{\mu_t'}{\mu_t}\bigr)^{q-1}], \,b_t - b_t \circ \tau_{-a_t v} - \dot a_t v\Bigr\rangle
    = -(q-1)\,\E_{\mu_t'}\Bigl[ \bigl( \frac{\mu_t'}{\mu_t}\bigr)^{q-1}\, \bigl\langle \nabla \log \frac{\mu_t'}{\mu_t}, \,b_t - b_t \circ \tau_{-a_t v} - \dot a_t v\bigr\rangle\Bigr] \\
    &\qquad\le (q-1)\,\Bigl\{\frac{1}{2}\,\E_{\mu_t'}\Bigl[\bigl(\frac{\mu_t'}{\mu_t}\bigr)^{q-1} \,\bigl\lVert \nabla \log \frac{\mu_t'}{\mu_t}\bigr\rVert^2_{\sigma_t \sigma_t^\T}\Bigr] +\frac{1}{2}\,\E_{\mu_t'}\Bigl[\bigl(\frac{\mu_t'}{\mu_t}\bigr)^{q-1}\,\norm{b_t - b_t \circ \tau_{-a_t v} - \dot a_t v}^2_{(\sigma_t \sigma_t^\T)^{-1}}\Bigr]\Bigr\}\,.
\end{align*}
Hence,
\begin{align*}
    \partial_t \msf D_q(\mu_t' \mmid \mu_t)
    &\le \frac{q\,(q-1)}{2}\,\E_{\mu_t'}\Bigl[\bigl(\frac{\mu_t'}{\mu_t}\bigr)^{q-1}\,\norm{b_t - b_t \circ \tau_{-a_t v} - \dot a_t v}^2_{(\sigma_t \sigma_t^\T)^{-1}}\Bigr] \\
    &\le \frac{q\,(q-1)\,\norm v^2}{2\lambda}\,{(La_t + \dot a_t)}^2\,\E_{\mu_t'}\bigl[\bigl(\frac{\mu_t'}{\mu_t}\bigr)^{q-1}\bigr]\,.
\end{align*}
Differentiating the R\'enyi divergence via the chain rule, noting that $\Ren_q = \frac{1}{q-1} \log(1+\msf D_q)$,
\begin{align*}
    \partial_t\Ren_q(\mu_t' \mmid \mu_t)
    &\le \frac{q\,\norm v^2}{2\lambda}\,{(La_t + \dot a_t)}^2
\end{align*}
and therefore
\begin{align*}
    \Ren_q(\mu_T * \delta_v \mmid \mu_T)
    &\le \frac{q\,\norm v^2}{2\lambda}\int_0^T {(La_t + \dot a_t)}^2 \, \D t\,.
\end{align*}
Choosing $\{a_t\}_{t\in [0,T]}$ as in~\eqref{eq:cont_opt_shift}, we again arrive at the sharp bound
\begin{align*}
    \Ren_q(\mu_T * \delta_v \mmid \mu_T)
    &\le \frac{qL\,\norm v^2}{\lambda\,(1-\exp(-2LT))}\,.
\end{align*}
    \section{Shift Harnack inequalities and curvature upper bounds}\label{sec:harnack}

We begin with brief background on shift Harnack inequalities in \S\ref{sec:shift_harnack_bg} (see also the discussion in \S\ref{sec:intro}). We recall the duality between shift Harnack inequalities and reverse transport inequalities in \S\ref{ssec:prelim:duality} which, together with our results from \S\ref{sec:disc} and \S\ref{sec:cont}, lead to our shift Harnack inequalities for the Langevin diffusion with \emph{optimal} constants.
We leverage these sharp inequalities to prove our main Theorem~\ref{thm:equiv} detailing equivalences and implications between the shift Harnack inequalities, a local gradient-entropy bound, and curvature bounds. Finally, implications for the stationary distribution are discussed in \S\ref{sec:implications_stationary}.

\subsection{Background on shift Harnack inequalities}\label{sec:shift_harnack_bg}

We now apply our results to the study of \emph{shift Harnack inequalities}, which were introduced by F.-Y.\ Wang in~\cite{Wang14ShiftHarnack}.
In particular, for a given Markov semigroup ${(P_t)}_{t\ge 0}$, we consider inequalities which take the form
\begin{align}\label{eq:bg_shift_har}
    {P_t(f(\cdot + v))}^p
    &\le P_t(f^p)\exp\bigl(C_p(t)\,\norm v^2\bigr)\,,
\end{align}
for all $v\in\R^d$, all $t > 0$, and all non-negative functions $f : \R^d\to\R_{\geq 0}$.
We also consider the shift log-Harnack inequality of the form
\begin{align}\label{eq:bg_shift_log_har}
    P_t(f(\cdot + v))
    &\le C_{\log}(t)\,\norm v^2 + \log P_t(\exp f)\,,
\end{align}
again for all $v\in\R^d$, all $t > 0$, and all non-negative functions $f : \R^d\to\R_{\geq 0}$.

Recall from \S\ref{sec:intro} that whereas Harnack inequalities encode regularity for Kolmogorov's backward equation, shift Harnack inequalities were introduced to encode the regularity of Kolmogorov's forward equation, and therefore yield information about the Lebesgue density of the marginal law of the stochastic process.
For instance, the validity of either~\eqref{eq:bg_shift_har} or~\eqref{eq:bg_shift_log_har} implies that for each $t > 0$, the semigroup $P_t$ admits a transition density $p_t$ with respect to Lebesgue measure which satisfies certain bounds, see~\cite[\S 1.4.2]{Wang13HarnackSPDE} for further applications such as heat kernel bounds.
For intuition, we illustrate this principle with a simple example below.

However, whereas Harnack inequalities are known to be \emph{equivalent} to the curvature-dimension condition, as well as to Wasserstein contraction and other functional inequalities such as the local Poincar\'e/log-Sobolev inequalities and gradient commutation inequalities (c.f.~\cite{bakry2014analysis, Wang14Diffusion} or~\cite[\S 6.1]{scr1}), such equivalences have not been fleshed out for shift Harnack inequalities. We aim to explore this point further in \S\ref{sec:equiv}.

\paragraph*{An illustrative example.}
As we show in the subsequent section, shift (log) Harnack inequalities are dual to certain reverse transport inequalities.
In particular,~\eqref{eq:bg_shift_har} is equivalent to an inequality of the form
\begin{align}\label{eq:bg_rev_transport}
    \Ren_q(\delta_x P_t * \delta_v \mmid \delta_x P_t)
    &\le \bar C_q(t)\,\norm v^2
\end{align}
for another constant $\bar C_q(t) > 0$, where $q = p/(p-1)$ is the dual exponent.
Supposing now that~\eqref{eq:bg_rev_transport} holds for $p=q=2$, let $\phi : \R^d\to\R$ be a test function (i.e., smooth and compactly supported).
For any vector $v\in\R^d\setminus\{0\}$, we can estimate $\abs{\langle \E_{\delta_x P_t} \nabla \phi, v\rangle} = \abs{\langle P_t(\nabla \phi)(x), v\rangle}$ via
\begin{align*}
    \lim_{\varepsilon \searrow 0} \frac{\abs{\E_{\delta_x P_t}[ \phi(\cdot + \varepsilon v) - \phi(\cdot)]}}{\varepsilon}
    &= \lim_{\varepsilon \searrow 0} \frac{\abs{\E_{\delta_x P_t * \delta_{\varepsilon v}} \phi - \E_{\delta_x P_t}\phi}}{\varepsilon}
    \leq
    \|\phi\|_{L^2(\delta_x P_t)}\cdot \limsup_{\varepsilon \searrow 0} 
    \frac{\sqrt{\chi^2(\delta_x P_t *\delta_{\varepsilon v} \mmid \delta_x P_t)}}{\varepsilon}
    \\
    &\le \|\phi\|_{L^2(\delta_x P_t)} \cdot \limsup_{\varepsilon \searrow 0} \frac{\sqrt{\exp(\Ren_2(\delta_x P_t * \delta_{\varepsilon v} \mmid \delta_x P_t)) - 1}}{\varepsilon} \\
    &\le \sqrt{\bar C_2(t)}\, \norm\phi_{L^2(\delta_x P_t)}\,\norm v\,.
\end{align*}
This inequality establishes $\norm{\E_{\delta_x P_t}\nabla \phi} \le \sqrt{\bar C_2(t)}\,\norm \phi_{L^2(\delta_x P_t)}$ for all test functions $\phi$. Since we can formally write $\E_{\delta_x P_t}\nabla \phi = -\E_{\delta_x P_t}[\phi\,\nabla \log \delta_x P_t]$, this can be interpreted as an $L^2$ bound for the logarithmic gradient of the Lebesgue density of $\delta_x P_t$.
In \S\ref{sec:implications_stationary}, we make this viewpoint precise by deriving sharp high-probability bounds for the logarithmic gradient.

\subsection{Duality between shift Harnack inequalities and reverse transport inequalities}\label{ssec:prelim:duality}

Since we aim to deduce shift Harnack inequalities from the reverse transport inequalities established in preceding sections, we formally state and prove a duality principle below.

\begin{lemma}\label{lem:duality}
    Let $\mu$ be a probability measure over $\R^d$ and let $v\in\R^d$.
    \begin{enumerate}
        \item Let $C_{\log,v} > 0$. Then,
        \begin{align}\label{eq:general_shift_log_harnack}
            \mu(f(\cdot + v))
            &\le C_{\log,v} + \log \mu(\exp f) \qquad\text{for all}~f : \R^d\to\R
        \end{align}
        if and only if
        \begin{align*}
            \KL(\mu * \delta_v \mmid \mu)
            &\le C_{\log,v}\,.
        \end{align*}
        \item Let $q > 1$ and let $p \deq q/(q-1)$ denote its H\"older conjugate. Let $C_{p,v} > 0$.
        Then,
        \begin{align}\label{eq:general_shift_harnack}
            \mu(f(\cdot + v))
            &\le C_{p,v}\, {\mu(f^p)}^{1/p} \qquad\text{for all}~f : \R^d\to\R_{\geq 0}
        \end{align}
        if and only if
        \begin{align*}
            \Ren_q(\mu * \delta_v \mmid \mu)
            &\le \frac{q}{q-1} \log C_{p,v}\,.
        \end{align*}
    \end{enumerate}
\end{lemma}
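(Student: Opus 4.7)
The plan is to treat both parts as standard instances of convex duality, centered on the Radon--Nikodym density $\rho \deq \frac{\D(\mu * \delta_v)}{\D\mu}$. In the regime $\mu * \delta_v \not\ll \mu$, both divergences on the right-hand sides are infinite and the corresponding Harnack inequality fails (take $f$ to be an indicator, or a tall multiple of an indicator, of a $\mu$-null set carrying mass of $\mu * \delta_v$), so I may assume $\rho$ exists.

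For part~(1), the forward direction is the Donsker--Varadhan variational inequality $\int f\,\D\nu \le \KL(\nu\mmid\mu) + \log\int \exp f\,\D\mu$ applied to $\nu = \mu * \delta_v$, which is exactly $\mu(f(\cdot + v)) \le \KL(\mu * \delta_v \mmid \mu) + \log \mu(\exp f)$. For the converse, I would test~\eqref{eq:general_shift_log_harnack} with $f = \log \rho$: the left-hand side becomes $\int \rho \log \rho \, \D\mu = \KL(\mu * \delta_v \mmid \mu)$ while $\log \mu(\exp f) = \log \int \rho \, \D\mu = 0$, yielding the bound immediately.

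For part~(2) I run the same template through H\"older rather than Donsker--Varadhan. Writing $\mu(f(\cdot + v)) = \int f \rho \, \D\mu$ and applying H\"older with conjugate exponents $p$ and $q$ gives $\mu(f(\cdot + v)) \le \mu(f^p)^{1/p} \, \bigl(\int \rho^q \, \D\mu\bigr)^{1/q}$, and the definition~\eqref{eq:renyi} rewrites the second factor as $\exp\bigl(\frac{q-1}{q}\,\Ren_q(\mu * \delta_v \mmid \mu)\bigr)$, so the assumed R\'enyi bound produces~\eqref{eq:general_shift_harnack} with constant $C_{p,v}$. For the converse, I saturate H\"older by testing~\eqref{eq:general_shift_harnack} with $f = \rho^{q-1}$, so that $f^p = \rho^{p(q-1)} = \rho^q$ and $\mu(f(\cdot + v)) = \int \rho^q \, \D\mu$; the inequality collapses to $\bigl(\int \rho^q \, \D\mu\bigr)^{1/q} \le C_{p,v}$, and taking logarithms and multiplying by $\frac{q}{q-1}$ delivers the claimed bound on $\Ren_q$.

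The only real obstacle is admissibility of the test functions $\log\rho$ and $\rho^{q-1}$ in the converse directions when $\rho$ is unbounded or vanishes on a set of positive $\mu$-measure. I would handle this by a standard truncation: replace $\rho$ by a bounded, bounded-away-from-zero surrogate such as $\min(\rho \vee M^{-1}, M)$, apply the Harnack hypothesis to the corresponding test function (which is bounded and hence unambiguously admissible), and pass to the limit $M \to \infty$ by monotone/dominated convergence. Everything else in the argument is just algebraic rearrangement.
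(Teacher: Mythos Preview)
Your proposal is correct and follows essentially the same approach as the paper: the paper invokes H\"older duality and the Donsker--Varadhan variational principle to identify the best constant in each inequality with the corresponding divergence, which is exactly what your explicit test functions $f = \rho^{q-1}$ and $f = \log\rho$ accomplish. You are simply more explicit about the converse directions and the truncation needed for admissibility, whereas the paper states the duality at a higher level of abstraction.
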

\begin{proof}
    Define the linear functional $L_v : L^p(\mu) \to \R$ via $f \mapsto \mu(f(\cdot + v))$.
    Then, the best constant $C_{p,v}$ in the shift Harnack inequality~\eqref{eq:general_shift_harnack} equals the operator norm $\norm{L_v}_{L^p(\mu)\to\R}$.
    On the other hand, since $L_v f = \int f \, \D(\mu *\delta_v) = \int f \,\frac{\D(\mu * \delta_v)}{\D \mu}\,\D \mu$, by H\"older duality it follows that $C_{p,v} = \norm{\frac{\D(\mu*\delta_v)}{\D\mu}}_{L^q(\mu)}$, from which the second equivalence follows.

    Similarly, the first equivalence follows from the Donsker{--}Varadhan variational principle, which shows that the best constant $C_{\log,v}$ in the inequality~\eqref{eq:general_shift_log_harnack} equals $\KL(\mu * \delta_v \mmid \mu)$.
\end{proof}

\subsection{Relationship with local gradient-entropy bounds and curvature upper bounds}\label{sec:equiv}

By combining the sharp reverse transport inequalities from \S\ref{sec:disc} and \S\ref{sec:cont} with the duality principle from \S\ref{ssec:prelim:duality}, we are now able to obtain \emph{sharp} shift (log) Harnack inequalities.
Moreover, as discussed in \S\ref{sec:intro}, the sharpness of the inequalities allows us to explore their relationship with other functional inequalities (namely, a ``local gradient-entropy inequality'') and with curvature bounds.
For convenience, we restate and prove our main theorem to this effect.

\equivthm*

\begin{proof}
    $\text{\ref{CurvBDD}} \Rightarrow [\forall p > 1\;\text{\ref{SHp}}] \Rightarrow [\exists p > 1\;\text{\ref{SHp}}]$.
    The first implication immediately follows from the reverse transport inequalities established in~\S\ref{sec:disc} and \S\ref{sec:cont}, together with Lemma~\ref{lem:duality}.
    The second implication is trivial.

    $\text{\ref{SHp}} \Leftrightarrow \text{\ref{SRTq}}$ for $q = \frac{p}{p-1}$, and $\text{\ref{SHlog}} \Leftrightarrow \text{\ref{SRTlog}}$. This follows from the duality encapsulated in Lemma~\ref{lem:duality}.
    
    $[\forall p > 1\;\text{\ref{SHp}}] \Leftrightarrow$ \ref{LGE}.
    According to~\cite[Proposition 1.3.2]{Wang13HarnackSPDE}\footnote{Take $\delta_e = 0$ and $\beta_e(\delta, x) = \beta\,\norm e^2/\{2\delta\,(1-\exp(-2\beta t))\}$ therein.}, the validity of the shift Harnack inequalities for all $p > 1$ is equivalent to
    \begin{align*}
        \abs{\langle P_t \nabla f, v\rangle}
        &\le \eta\,\{P_t(f\log f) - P_t f \log P_t f\} + \frac{\beta\,\norm v^2}{2\eta\,(1-\exp(-2\beta t))}\,P_t f\,, \qquad\text{for all}~\eta > 0\,.
    \end{align*}
    Optimizing over the choice of $\eta$, this is equivalent to
    \begin{align*}
        \frac{\abs{\langle P_t \nabla f, v\rangle}}{\norm v}
        &\le \sqrt{\frac{2\beta}{1-\exp(-2\beta t)}\,P_t f\,\{P_t(f\log f) - P_t f \log P_t f\}}\,.
    \end{align*}
    Since this holds for every choice of $v\in\R^d\setminus \{0\}$, this is~\ref{LGE}.

    $[\exists p > 1\;\text{\ref{SHp}}] \Rightarrow \text{\ref{SHlog}}$. This implication is~\cite[Theorem 1.3.5]{Wang13HarnackSPDE}.

    $\text{\ref{SHlog}} \Rightarrow \text{\ref{CurvUB}}$. Finally, we show that the shift log-Harnack inequality implies back the curvature upper bound via Taylor expansion.
    
    Let $f : \R^d\to\R$ be a compactly supported and smooth function such that $\nabla f(x) = u \in \R^d$ and $\nabla^2 f(x) = 0$.
    We apply the shift log-Harnack inequality~\ref{SHlog} with $v = ctu$ for some $c > 0$ to be chosen later, yielding the inequality
    \begin{align}\label{eq:slharnack_implies_curv}
        P_t(f(\cdot + ctu))(x) \le \log P_t(\exp f)(x) + \frac{\beta c^2 t^2\, \norm{\nabla f(x)}^2}{2\,(1-\exp(-2\beta t))}\,.
    \end{align}
    We now perform a careful Taylor expansion.
    For the left-hand side,
    \begin{align*}
        \partial_t P_t(f(\cdot + ctu))(x)
        &= \ms LP_t(f(\cdot + ctu))(x) + c\,\langle P_t \nabla f(\cdot + ctu), u\rangle(x)\,,
    \end{align*}
    and hence, since $\nabla^2 f(x) = 0$, we obtain
    \begin{align*}
        \partial_t\big|_{t=0} P_t(f(\cdot + ctu))(x)
        &= \ms Lf(x) + c\,\norm{\nabla f(x)}^2\,, \\
        \partial_t^2\big|_{t=0} P_t(f(\cdot + ctu))(x)
        &= \ms L^2 f(x) + 2c\,\langle \nabla f(x), \ms L\nabla f(x)\rangle\,.
    \end{align*}
    For the first term on the right-hand side,
    \begin{align*}
        \partial_t \log P_t(\exp f)(x)
        &= \frac{\ms L P_t(\exp f)(x)}{\exp f(x)}\,,
    \end{align*}
    and a tedious calculation based on the diffusion chain rule $\ms L\phi(f) = \phi'(f)\,\ms Lf + \phi''(f)\,\norm{\nabla f}^2$, the carr\'e du champ identity $\ms L(fg) = f\,\ms L g + g\,\ms L f + 2\,\langle \nabla f,\nabla g\rangle$, and $\nabla^2 f(x) = 0$ yields
    \begin{align*}
        \partial_t\big|_{t=0} \log P_t(\exp f)(x)
        &= \frac{\ms L(\exp f)(x)}{\exp f(x)}
        = \ms L f(x) + \norm{\nabla f(x)}^2\,, \\
        \partial_t^2\big|_{t=0} \log P_t(\exp f)(x)
        &= \ms L^2 f(x) + \ms L(\norm{\nabla f}^2)(x) + 2\,\langle \nabla f(x), \nabla \ms L f(x)\rangle\,.
    \end{align*}
    We now set $c=2$, substitute these expansions into~\eqref{eq:slharnack_implies_curv}, and divide by $t^2$ to obtain
    \begin{align*}
        2\,\langle\nabla f(x), \ms L\nabla f(x)\rangle
        &\le \frac{1}{2}\,\ms L(\norm{\nabla f}^2)(x) + \langle \nabla f(x), \nabla \ms L f(x) \rangle \\
        &\qquad{} + \frac{1}{t}\,\Bigl(\frac{2\beta t}{1-\exp(-2\beta t)} -1 \Bigr)\,\norm{\nabla f(x)}^2 + o(1)\,.
    \end{align*}
    Sending $t\searrow 0$, it yields the inequality
    \begin{align*}
        2\,\langle\nabla f(x), \ms L\nabla f(x)\rangle
        &\le \frac{1}{2}\,\ms L(\norm{\nabla f}^2)(x) + \langle \nabla f(x), \nabla \ms L f(x) \rangle + \beta\,\norm{\nabla f(x)}^2\,.
    \end{align*}
    We now use the identities $\frac{1}{2}\,\ms L(\norm{\nabla f}^2) - \langle \nabla f, \nabla \ms L f\rangle = \Gamma_2(f,f)$, where $\Gamma_2$ denotes the iterated carr\'e du champ operator, and the commutation relation $\ms L \nabla f - \nabla \ms L f = \nabla^2 V \,\nabla f$.
    Hence,
    \begin{align*}
        2\,\langle \nabla f(x),\nabla^2 V(x)\,\nabla f(x)\rangle \le \Gamma_2(f,f)(x) + \beta\,\norm{\nabla f(x)}^2\,.
    \end{align*}
    Recall that $\Gamma_2(f,f) = \langle \nabla f,\nabla^2 V\,\nabla f\rangle + \norm{\nabla^2 f}_{\rm HS}^2$.
    Since we have chosen $\nabla f(x) = u$ and $\nabla^2 f(x) = 0$, we finally obtain
    \begin{align*}
        \langle u, \nabla^2 V(x)\,u\rangle \le \beta\,\norm u^2\,.
    \end{align*}
    Since this holds true for all $u,x\in\R^d$, it proves $\nabla^2 V \preceq \beta I$.
\end{proof}

\begin{remark}[Comparison with the local LSI]
    The local gradient-entropy bound~\ref{LGE} can be compared to the following two forms of the local log-Sobolev inequality, which are equivalent to the curvature-dimension condition $\CD(\alpha,\infty)$ (see~\cite[Theorem 5.5.2]{bakry2014analysis}):
    \begin{align*}
        P_t\bigl(\frac{\norm{\nabla f}^2}{f}\bigr)
        &\ge \frac{2\alpha}{1-\exp(-2\alpha t)}\,\{P_t(f\log f) - P_t f \log P_t f\}\,, \\
        \frac{\norm{\nabla P_t f}^2}{P_t f}
        &\le \frac{2\alpha}{\exp(2\alpha t) - 1}\,\{P_t(f\log f) - P_t f \log P_t f\}\,.
    \end{align*}
\end{remark}

We conclude this section with a few speculative comments.

The chain of implications in Theorem~\ref{thm:equiv}---with sharp constants---is, to our knowledge, new.
However, we are unable to close the chain, e.g., by showing that~\ref{CurvUB} suffices to establish~\ref{SHp}.
As discussed in \S\ref{sec:intro}, characterizations of~\ref{CurvUB} would be of great interest toward a synthetic theory of Ricci curvature upper bounds.
In this direction, we remark that if~\ref{SHlog} is suitably generalized to a Riemannian manifold $\mc M$ by replacing the translation $f(\cdot + v)$ with $f\circ \exp v$, where $v$ is a smooth vector field on $\mc M$, then the implication $\text{\ref{SHlog}} \Rightarrow \text{\ref{CurvUB}}$ appears to generalize to yield $\nabla^2 V + \msf{Ric} \le \beta$, essentially because $\Gamma_2(f,f) = \langle \nabla f, (\nabla^2 V + \msf{Ric})\,\nabla f\rangle + \norm{\nabla^2 f}_{\rm HS}^2$.

We also note that another route toward characterizing~\ref{CurvUB} would be to show that it is equivalent to a ``gradient commutation bound''.
For example, let us try to establish~\ref{SHlog} via a semigroup calculation.
It is natural to differentiate $s \mapsto P_s \phi(P_{t-s}(f(\cdot + a_s v)))$ where $\{a_s\}_{s\in [0,t]}$ satisfies $a_0 = 0$ and $a_t = 1$ and we take $\phi \deq \log$.
Upon carrying out the calculation, one sees that the following gradient bound is needed:
\begin{align}\label{eq:proposed_gradient_bd}
    \norm{P_s \nabla f} \overset{?}{\le} \exp(\beta s)\,\norm{\nabla P_s f}\,, \qquad\text{for all}~s \ge 0\,.
\end{align}
We are unsure if~\eqref{eq:proposed_gradient_bd} holds, and in any case, we are unable to prove it.
We leave the further investigation of these questions to future work.

\subsection{Implications for the stationary distribution}\label{sec:implications_stationary}

Since the sharp constants in Theorem~\ref{thm:equiv} tend to finite limits as $t\to\infty$, it immediately furnishes consequences for the stationary distribution $\pi$ of the Langevin diffusion, if it exists.

\begin{cor}\label{cor:stationary}
    Suppose that $\pi\propto\exp(-V)$ defines a probability measure over $\R^d$ and that $V$ satisfies the two-sided curvature bound $-\beta I \preceq \nabla^2 V \preceq \beta I$, where $\beta > 0$.
    Then, the following hold.
    \begin{itemize}
        \item (Gradient-entropy bound,~\ref{eq:LGE_pi}) For all smooth $f : \R^d\to\R_{>0}$,
        \begin{align*}
            \frac{\norm{\E_\pi \nabla f}^2}{\E_\pi f}
            &\le 2\beta \ent_\pi(f)\,.
        \end{align*}
        \item (Shift Harnack inequality) For all $p > 1$, all $v\in\R^d$, and all $f : \R^d\to\R_{>0}$,
        \begin{align*}
            {\{\E_\pi f(\cdot + v)\}}^p \le \exp\Bigl(\frac{\beta p\,\norm v^2}{2\,(p-1)}\Bigr)\,\E_\pi(f^p)\,.
        \end{align*}
        \item (Reverse R\'enyi transport inequality) For all $q > 1$ and all $v\in\R^d$,
        \begin{align*}
            \Ren_q(\pi * \delta_v \mmid \pi)
            &\le \frac{\beta q\,\norm v^2}{2}\,.
        \end{align*}
        \item (Shift log-Harnack inequality) For all $v\in\R^d$ and all $f : \R^d\to\R_{>0}$,
        \begin{align}\label{eq:SHlogpi}
            \E_\pi f(\cdot + v) \le \log \E_\pi(\exp f) + \frac{\beta\,\norm v^2}{2}\,.
        \end{align}
        \item (Reverse KL transport inequality) For all $v\in\R^d$,
        \begin{align}\label{eq:SRT1pi}
            \KL(\pi * \delta_v \mmid \pi)
            &\le \frac{\beta\,\norm v^2}{2}\,.
        \end{align}
    \end{itemize}
\end{cor}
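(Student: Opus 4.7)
The plan is to derive each inequality in Corollary~\ref{cor:stationary} by passing to the limit $t \to \infty$ in the corresponding statement of Theorem~\ref{thm:equiv}. The key observation is that every sharp constant in Theorem~\ref{thm:equiv} is a fixed prefactor divided by $1 - \exp(-2\beta t)$, and this factor tends to $1$ as $t \to \infty$, yielding exactly the constants claimed for the stationary measure. Under the standing hypothesis that $\pi \propto \exp(-V)$ is a probability measure and $\nabla^2 V \succeq -\beta I$, the Langevin semigroup is ergodic, so $\delta_x P_t \to \pi$ weakly, and $P_t \varphi(x) \to \E_\pi \varphi$ for bounded continuous $\varphi$ (with stronger modes of convergence available under mild integrability on $\varphi$).

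First I would handle the reverse transport inequalities, which are the cleanest. Applying $\text{\ref{SRTq}}$ at time $t$ and invoking the lower semi-continuity of $\Ren_q$ with respect to weak convergence of its first argument gives
\begin{align*}
    \Ren_q(\pi * \delta_v \mmid \pi)
    \le \liminf_{t \to \infty} \Ren_q(\delta_x P_t * \delta_v \mmid \delta_x P_t)
    \le \lim_{t \to \infty} \frac{\beta q \, \norm v^2}{2\,(1 - \exp(-2\beta t))}
    = \frac{\beta q \, \norm v^2}{2}\,,
\end{align*}
and the analogous argument with $\KL$ in place of $\Ren_q$ yields~\eqref{eq:SRT1pi}. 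The shift Harnack and shift log-Harnack inequalities for $\pi$ then follow immediately from these reverse transport bounds via the duality Lemma~\ref{lem:duality}, which bypasses the need to pass limits through nonlinear functionals of the semigroup.

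For the gradient-entropy bound~\eqref{eq:LGE_pi}, I would argue in parallel to the equivalence $[\forall p > 1\;\text{\ref{SHp}}] \Leftrightarrow$~\ref{LGE} established in the proof of Theorem~\ref{thm:equiv}: replacing $f$ by $1 + \eta f$ in the stationary shift log-Harnack inequality~\eqref{eq:SHlogpi} and expanding to first order in $\eta \searrow 0$ recovers the linearized inequality
\begin{align*}
    \abs{\langle \E_\pi \nabla f, v\rangle}
    \le \eta \,\ent_\pi(f) + \frac{\beta \,\norm v^2}{2\eta}\, \E_\pi f\,,
\end{align*}
for every $\eta > 0$; optimizing over $\eta$ and then over unit vectors $v$ yields~\eqref{eq:LGE_pi}. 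Alternatively one can simply pass $t \to \infty$ in~\ref{LGE}, using $P_t \nabla f(x) \to \E_\pi \nabla f$, $P_t f(x) \to \E_\pi f$, and $P_t(f \log f) - P_t f \log P_t f \to \ent_\pi(f)$ for sufficiently regular $f$.

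The only genuine subtlety I foresee is justifying the ergodic limit $\delta_x P_t \to \pi$ needed in the weak lower semi-continuity step; this is where the integrability assumption $\pi \propto \exp(-V)$ becomes essential, since the curvature bound alone does not guarantee existence of a stationary distribution. Once ergodicity is in hand, however, each remaining step is routine, and a density argument extends the conclusions from bounded continuous (or smooth) test functions $f$ to the full class stated in the corollary.
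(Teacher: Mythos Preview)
Your proposal is essentially the paper's own argument: the paper's proof of Corollary~\ref{cor:stationary} is literally one line, ``take $t\to\infty$ in Theorem~\ref{thm:equiv} and apply duality,'' and you have correctly fleshed out the mechanics behind that sentence (ergodicity $\delta_x P_t \to \pi$, joint lower semi-continuity of $\Ren_q$ and $\KL$ under weak convergence, then Lemma~\ref{lem:duality} for the Harnack-type statements).

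One small correction: your route~(a) for the gradient-entropy bound is muddled. You invoke the shift \emph{log}-Harnack inequality~\eqref{eq:SHlogpi}, but in Theorem~\ref{thm:equiv} the equivalence is $\text{\ref{LGE}} \Leftrightarrow [\forall p>1\;\text{\ref{SHp}}]$, and~\ref{SHlog} sits strictly downstream of~\ref{LGE} in the chain of implications; one cannot recover~\ref{LGE} from~\ref{SHlog} alone by any linearization. Moreover, the substitution $f \mapsto 1+\eta f$ with $\eta \searrow 0$ is not the mechanism behind the displayed two-parameter inequality --- that inequality comes from Wang's Proposition~1.3.2 applied to the full family~\ref{SHp}, with $\eta$ a free parameter (not a small expansion parameter) arising from the exponent $p$. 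Your route~(b), passing $t\to\infty$ directly in~\ref{LGE}, is correct and is what the paper intends; just drop route~(a).
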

\begin{proof}
    The proofs are immediate by taking $t\to\infty$ in Theorem~\ref{thm:equiv} and applying duality.
\end{proof}

\begin{remark}
    The reverse KL transport inequality~\eqref{eq:SRT1pi} can be established via a simple calculation assuming only~\ref{CurvUB}.\footnote{We also assume, as we do throughout the paper, that there is enough regularity to justify the computations, e.g., $V$ is twice continuously differentiable.}
    Indeed, since $\nabla^2 V \preceq \beta I$, Taylor expansion yields
    \begin{align*}
        \KL(\pi * \delta_v \mmid \pi)
        &= \int \bigl(V(x+v) - V(x)\bigr)\,\pi(\D x)
        \le \int \bigl(\langle \nabla V(x), v\rangle + \frac{\beta\,\norm v^2}{2}\bigr)\,\pi(\D x)
        = \frac{\beta\,\norm v^2}{2}\,,
    \end{align*}
    where we used the identity $\E_\pi \nabla V = 0$.
    By duality (\S\ref{ssec:prelim:duality}), this also yields the shift log-Harnack inequality~\eqref{eq:SHlogpi}.
    However, we are unable to establish the other statements in Corollary~\ref{cor:stationary} under~\ref{CurvUB} alone, rather than the stronger~\ref{CurvBDD}; c.f.\ the discussions in \S\ref{sec:intro} and \S\ref{sec:equiv}.
\end{remark}

To illustrate the information about $\pi$ that such inequalities capture, we explore the consequences of~\eqref{eq:LGE_pi}.
If we take $f$ with $\E_\pi f = 1$ and set $\D\mu = f\,\D\pi$, then it is equivalent to the following inequality: for every regular probability measure $\mu \ll \pi$,
\begin{align}\label{eq:LGE_stationary}
    2\beta\, \KL(\mu \mmid \pi)
    &\ge \Bigl\lVert \E_\pi \nabla \frac{\D\mu}{\D\pi}\Bigr\rVert^2
    = \Bigl\lVert \E_\mu \nabla \log \frac{\D\mu}{\D\pi}\Bigr\rVert^2
    = \norm{\E_\mu \nabla V}^2\,,
\end{align}
where we used the score identity $\E_\mu\nabla \log \mu = 0$.
We now demonstrate how to extract from this formulation
a concentration inequality for $\nabla V$ under $\pi$.
We restate the result for convenience. 

\gradVbd*

\begin{proof}
    Let $e \in\R^d$ be a unit vector, so that $\norm{\E_\mu \nabla V} \ge \abs{\E_\mu\langle e, \nabla V\rangle}$.
    For shorthand, we write $V_e \deq \langle e, \nabla V\rangle$.
    Before proceeding to the proof, we first provide some intuition.
    Recall the Bobkov{--}G\"otze argument~\cite{BobGot1999Transport}: to show that $V_e$ is sub-Gaussian, i.e., to bound the logarithmic moment-generating function $\log \E_\pi\exp(\lambda V_e)$, we first apply the Donsker{--}Varadhan variational principle to argue that $\log \E_\pi \exp(\lambda V_e) = \sup_{\mu\in\mc P(\R^d)}\{\lambda\, \E_\mu V_e - \KL(\mu\mmid \pi)\}$.
    If $V_e$ is $L$-Lipschitz and if $\pi$ satisfies a transport inequality of the form $W_1(\mu,\pi) \le \sqrt{2\alpha^{-1}\,\KL(\mu \mmid \pi)}$, it implies (since $\E_\pi V_e = 0$)
    \begin{align*}
        \log \E_\pi\exp(\lambda V_e)
        &\le \sup_{\mu\in\mc P(\R^d)}\bigl\{ L\lambda\sqrt{2\alpha^{-1}\,\KL(\mu\mmid \pi)} - \KL(\mu\mmid \pi)\bigr\}
        \le \frac{L^2 \lambda^2}{2\alpha}\,.
    \end{align*}
    
    We follow this argument, but instead of assuming that $V_e$ is Lipschitz and that $\pi$ satisfies a transport inequality, we instead use the bound on $\abs{\E_\mu V_e}$ implied by~\eqref{eq:LGE_stationary}.
    This yields
    \begin{align*}
        \log \E_\pi\exp(\lambda V_e)
        &\le \sup_{\mu\in\mc P(\R^d)}\bigl\{\lambda\sqrt{2\beta \,\KL(\mu \mmid \pi)} - \KL(\mu \mmid \pi)\bigr\}
        \le \frac{\beta \lambda^2}{2}\,,
    \end{align*}
    as desired.
\end{proof}

Using covering arguments, Theorem~\ref{thm:gradV_concentration} implies various other high-probability bounds on the size of $\nabla V$ under $\pi$.
To illustrate, we give a bound on $\norm{\nabla V}$.
With Theorem~\ref{thm:gradV_concentration} in hand, the covering-based proof technique is standard and we provide it for completeness.

\begin{cor}\label{cor:gradV_norm}
    Let $\pi\propto \exp(-V)$ be a probability measure on $\R^d$ such that~\eqref{eq:LGE_pi} holds.
    There is a universal constant $C > 0$ such that for all $0 < \delta < 1/2$, with probability at least $1-\delta$ under $\pi$,
    \begin{align*}
        \norm{\nabla V} \le C\,\bigl(\sqrt{\beta d} + \sqrt{\beta \log(1/\delta)}\bigr)\,.
    \end{align*}
\end{cor}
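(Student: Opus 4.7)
The plan is to reduce the norm bound on $\nabla V$ to a uniform bound on linear marginals via a standard $\varepsilon$-net argument, then apply Theorem~\ref{thm:gradV_concentration} to each marginal and union-bound over the net.

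First, I would fix $\varepsilon \in (0,1)$ (say $\varepsilon = 1/2$) and let $\mc N \subset S^{d-1}$ be an $\varepsilon$-net of the Euclidean unit sphere in $\R^d$ with cardinality $\abs{\mc N} \le (1+2/\varepsilon)^d \le 5^d$; recall the standard comparison $\norm{\nabla V} \le (1-\varepsilon)^{-1} \sup_{e\in\mc N} \langle e, \nabla V\rangle$, which for $\varepsilon=1/2$ gives $\norm{\nabla V} \le 2 \sup_{e \in \mc N}\langle e,\nabla V\rangle$. So it suffices to bound the supremum over the finite net $\mc N$ with high probability.

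Next, for each fixed $e \in \mc N$, Theorem~\ref{thm:gradV_concentration} tells us that $\langle e, \nabla V\rangle$ is $\sqrt{\beta}$-sub-Gaussian under $\pi$, so the standard Chernoff bound gives
\begin{align*}
    \Pr_\pi\bigl(\langle e, \nabla V\rangle > t\bigr) \le \exp\bigl(-\tfrac{t^2}{2\beta}\bigr)\,.
\end{align*}
A union bound over the $\le 5^d$ elements of $\mc N$ yields
\begin{align*}
    \Pr_\pi\Bigl(\sup_{e\in\mc N}\langle e, \nabla V\rangle > t\Bigr) \le 5^d \exp\bigl(-\tfrac{t^2}{2\beta}\bigr)\,.
\end{align*}
Setting the right-hand side equal to $\delta$ and solving gives $t = \sqrt{2\beta\,(d\log 5 + \log(1/\delta))} \le \sqrt{2\beta d\log 5} + \sqrt{2\beta \log(1/\delta)}$, so combined with the net comparison we obtain $\norm{\nabla V} \le C\,(\sqrt{\beta d} + \sqrt{\beta\log(1/\delta)})$ for some universal constant $C>0$, with probability at least $1-\delta$.

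There is no real obstacle here; the argument is genuinely routine once Theorem~\ref{thm:gradV_concentration} is in hand. The only mild subtlety is to ensure that the net comparison $\norm{x} \le (1-\varepsilon)^{-1}\sup_{e \in \mc N}\langle e, x\rangle$ is applied to the \emph{signed} supremum rather than the absolute value (which would cost an extra factor of $2$ but is otherwise harmless), and to handle the restriction $\delta < 1/2$ only insofar as it ensures that $\log(1/\delta) > 0$ so that the two terms in the bound can be written additively.
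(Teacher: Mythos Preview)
Your proposal is correct and follows essentially the same approach as the paper: both take a $\tfrac{1}{2}$-net of the sphere of size $\exp(O(d))$, invoke Theorem~\ref{thm:gradV_concentration} for the sub-Gaussian tail of each $\langle e,\nabla V\rangle$, union-bound over the net, and use the standard net comparison $\norm{\nabla V}\le 2\sup_{e\in\mc N}\langle e,\nabla V\rangle$.
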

\begin{proof}
    This bound on the logarithmic moment-generating function in Theorem~\ref{thm:gradV_concentration} shows that for every unit vector $e$ and any $\eta > 0$,
    \begin{align*}
        \pi\{V_e \ge \eta\} \le \exp\Bigl( - \frac{\eta^2}{2\beta}\Bigr)
        \,.
    \end{align*}
    To obtain our gradient bound, let $\cN$ be a $\frac{1}{2}$-net of the unit sphere (i.e., $\cN$ has Hausdorff distance at most $\tfrac{1}{2}$ from the sphere). There exists such a set of size $\exp(O(d))$.
    By taking a union bound over vectors in this net, 
    \begin{align*}
        \pi\Bigl\{\sup_{e\in \cN} V_e \ge \eta\Bigr\} \le \exp\Bigl( - \frac{\eta^2}{2\beta} + O(d)\Bigr)\,.
    \end{align*}
    On this event, we have
    \begin{align*}
        \norm{\nabla V}
        = \Bigl\langle \frac{\nabla V}{\norm{\nabla V}}, \nabla V\Bigr\rangle
        \le \inf_{e\in \cN}{\biggl\{ \abs{\langle e, \nabla V\rangle} + \Bigl\lvert\Bigl\langle \frac{\nabla V}{\norm{\nabla V}} - e,\nabla V\Bigr\rangle\Bigr\rvert\biggr\}}
        \le \sup_{e\in \cN}{\abs{\langle e,\nabla V\rangle}} + \frac{1}{2}\,\norm{\nabla V}\,.
    \end{align*}
    Therefore,
    \begin{align*}
        \pi\{\norm{\nabla V} \ge \eta\}
        &\le \pi\Bigl\{\sup_{e\in \cN}{\abs{V_e}} \ge \frac{\eta}{2}\Bigr\}
        \le \exp\Bigl( - \frac{\eta^2}{8\beta} + O(d)\Bigr)\,.
    \end{align*}
    Finally, by choosing $\eta \asymp \sqrt{\beta\,(d+\log(1/\delta))}$, we obtain the statement.
\end{proof}
    
This bound can be compared with~\cite[Corollary 6]{leeshentian2020gradientconcentration} which implies that if $V$ is \emph{convex} and $\beta$-smooth, then with probability at least $1-\delta$,
\begin{align}\label{eq:LST_bd}
    \norm{\nabla V}
    &\le \sqrt{\beta d} + C \sqrt\beta \log(1/\delta)\,.
\end{align}
This concentration bound was used to sharpen the mixing time bounds for the Metropolis-adjusted Langevin algorithm obtained in~\cite{dwivedi2018log, chenetal2020hmc}.

The bound~\eqref{eq:LST_bd} achieves a sharper leading term of $\sqrt{\beta d}$ (in fact, they can replace $\sqrt{\beta d}$ with $\E_\pi\norm{\nabla V}$, which is sharper since $\E_\pi\norm{\nabla V} \le \sqrt{\E_\pi[\norm{\nabla V}^2]} = \sqrt{\E_\pi \Delta V} \le \sqrt{\beta d}$, where the middle equality follows from integration by parts).
On the other hand, Corollary~\ref{cor:gradV_norm} captures the correct tail behavior (sub-Gaussian rather than subexponential). Moreover, whereas~\cite{leeshentian2020gradientconcentration} relied on the Brascamp{--}Lieb inequality~\cite{BraLie1976} and hence on convexity of $V$, our argument shows that it is really the \emph{smoothness} of $V$ (through~\eqref{eq:LGE_pi}) that matters here.

We also mention that concentration of $V$ itself under $\pi$ has been studied in a number of works and in relation to the dimensional improvement of the Brascamp{--}Lieb inequality; see, e.g.,~\cite{Ngu14DimVar, Wang14HeatCap, FraMadWan16InfoContent, BolGenGui18DimImprove, Chewi23EntropicBarrier}.

	\paragraph*{Acknowledgements.} JMA acknowledges the support of an NYU Faculty Fellowship. SC acknowledges the support of the Eric and Wendy Schmidt Fund at the Institute for Advanced Study. 

    \newpage

    \appendix

\section{Deferred details}\label{sec:app}

\subsection{Tightness}\label{app:tightness}

Here we show tightness of the results in \S\ref{sec:disc} and \S\ref{sec:cont}. Specifically, we show that our forward regularity bounds are tight in all parameters in both the discrete-time and continuous-time settings. We do this by explicitly computing these quantities for the semigroup $(P_t)_{t\geq 0}$ corresponding to the Ornstein--Uhlenbeck (OU) process 
\begin{align}
    \D X_t = -L X_t \,\D t + \sqrt{2} \,\D B_t\,,
    \label{eq:ou}
\end{align}
for $L \geq 0$, and the associated discrete-time Markov kernel $\hat{P}_h$, defined as
\begin{align} 
    \hat{P}_h(x,\cdot) = \cN( r x, 2h I)\,,
    \label{eq:ou-disc}
\end{align}
where $r \deq 1 - Lh$. 
For brevity, we compute the relevant quantities only for Dirac initializations since this clearly implies tightness for general initializations. 

\par We remark that since convolution commutes with the OU semigroup (modulo scaling the convolution), tight forward regularity bounds can be recovered from the tight backward regularity bounds in Paper I~\cite{scr1}. This motivates the ensuing calculations and is why we write the (self-contained) calculations in a way that parallels~\cite{scr1}. However, we emphasize that this direct correspondence between forward and backward regularity is specific to the OU process (relying on its commutation properties and explicit solution), and recall that for general semigroups there are fundamental differences between forward and backward regularity (see the discussion in \S\ref{sec:intro}).

\paragraph*{Tightness of the discrete-time regularity.} As in~\cite{scr1}, an explicit computation gives
\begin{align*}
    \delta_x \hat{P}_h^N = \cN\Bigl( r^N x,\, 2h\, \frac{1 - r^{2N}}{1-r^2} \Bigr)\,.
\end{align*}
By Theorem~\ref{thm:renyi_prop},
\begin{align*}
    \Ren_q( \delta_x \hat{P}_h^N \ast \delta_v \mmid \delta_x \hat{P}_h^N)
    =
    \frac{q\,(1 - r^2)}{4h\,(1 - r^{2N})}\, \|v\|^2\,.
\end{align*} 
This exactly matches the upper bound in Theorem~\ref{thm:reg-disc}, establishing its tightness.

\paragraph*{Tightness of the continuous-time regularity.} A standard calculation for the OU process gives the explicit solution
\begin{align*}
    \delta_x P_T = \cN\Bigl( \exp( - L T)\,x,\, \frac{1 - \exp( - 2L T)}{L} \Bigr)\,.
\end{align*}
By Theorem~\ref{thm:renyi_prop},
\begin{align*}
    \Ren_q( \delta_x P_T \ast \delta_v \mmid \delta_x P_T)
    =
    \frac{qL}{ 2\,(1 - \exp( - 2 L T))}\,\|v\|^2\,.
\end{align*} 
This exactly matches the upper bound in Corollary~\ref{cor:reg-cont}, establishing its tightness.

\subsection{Generalized convolution lemma}\label{app:dual-sd}

\begin{proof}[Proof of Lemma~\ref{lem:sd-dual:convolution}]
    The case of positive $z \geq 0$ is~\cite[Lemma 20]{pabi}; consider now $z < 0$, i.e., dual shifted divergences. To simplify notation, re-parameterize $z$ by $-z$ so that we seek to prove
    \begin{align*}
        \Ren_q^{(-z)}\left( \mu \ast \xi \mmid \nu \ast \xi \right)
        \leq
         \Ren_q^{(-z+a)}(\mu \mmid \nu) 
         +
         S_q(\xi,a)
    \end{align*}
    for $z > 0$.
    We start with the easier case $a \le z$.
    Let $v_z$ be the optimal shift for the left-hand side. Observe that $v_{z-a}  \deq  \frac{z-a}{z}\, v_z$ satisfies $\|v_{z-a}\| \leq z-a$ and $\|v_z - v_{z-a}\| \leq a$. Thus
    \begin{align*}
        \Ren_q^{(-z)}( \mu \ast \xi \mmid \nu \ast \xi)
        = 
            \Ren_q( \mu \ast \xi \ast \delta_{v_z} \mmid \nu \ast \xi )
        \leq
            \Ren_q ( \mu \ast \delta_{v_{z-a}} \mmid \nu)
        +
        S_q(\xi, a)
        \leq
        \Ren_q^{(-z+a)}(\mu \mmid \nu) + 
         S_q(\xi, a)\,.
    \end{align*}
    Above, the first and last steps are by definition of the dual shifted divergence, and the middle step is by the composition rule for R\'enyi divergence (see, e.g.,~\cite[Theorem 2.2]{scr1}).

    For the case $a > z$, let $\mu'$ attain the infimum in the definition of $\Ren_q^{(a-z)}(\mu \mmid \nu)$, noting that this is an ordinary shifted divergence as $a-z > 0$.
    We apply the shifted composition rule (Theorem~\ref{thm:scr}) with $\bs\mu$ taken as the joint distribution of $X \sim \mu * \delta_{v_z}$, $X' \sim \mu'$, $Y = X + Z \sim \mu * \delta_{v_z} * \xi$ (where $X - v_z$ and $X'$ are optimally coupled for the $W_\infty$ distance, and $Z \sim \xi$ is independent of $(X,X')$), and $\bs\nu$ taken as the joint distribution of $X \sim \nu$ and $Y = X + Z \sim \nu * \xi$ ($Z\sim \xi$ is independent of $X$). It yields
    \begin{align*}
        \Ren_q^{(-z)}(\mu \ast \xi \mmid \nu \ast \xi)
        &= \Ren_q(\mu \ast \delta_{v_z} * \xi \mmid \nu \ast \xi)
        \le \Ren_q(\mu' \mmid \nu) + S_q\bigl(\xi, W_\infty(\mu * \delta_{v_z}, \mu')\bigr)\,.
    \end{align*}
    Since $\Ren_q(\mu' \mmid \nu) = \Ren_q^{(a-z)}(\mu \mmid \nu)$ and $W_\infty(\mu \ast \delta_{v_z}, \mu') \le W_\infty(\mu \ast \delta_{v_z},\mu) + W_\infty(\mu, \mu') \le z + a-z = a$, this completes the proof.
\end{proof}

    \small
	\addcontentsline{toc}{section}{References}
    \printbibliography{}
    
\end{document}